\documentclass[11pt,reqno]{amsart}
\usepackage[T1]{fontenc}
\usepackage{lmodern}
\usepackage{amsmath,amssymb}
\usepackage{microtype}
\usepackage[hidelinks]{hyperref}

\numberwithin{equation}{section}
\newtheorem{theorem}{Theorem}[section]
\newtheorem{lemma}[theorem]{Lemma}
\newtheorem{conjecture}[theorem]{Conjecture}
\newtheorem{proposition}[theorem]{Proposition}
\newtheorem{corollary}[theorem]{Corollary}
\theoremstyle{definition}
\newtheorem{definition}[theorem]{Definition}
\theoremstyle{remark}
\newtheorem{remark}[theorem]{Remark}

\newcommand{\R}{\mathbb{R}}
\newcommand{\eps}{\varepsilon}
\DeclareMathOperator{\Id}{Id}
\DeclareMathOperator{\Scal}{Scal}

\title[Immersed fillings and constant scalar curvature]
{Immersed hypersurfaces with positive first Newton transformation}
\author{Jiangcheng You and Heng Zhang}
\date{\today}

\address{School of Mathematical Sciences, University of Science and Technology of China, Hefei, China}
\email{yjcmp@mail.ustc.edu.cn}

\address{School of Mathematical Sciences, University of Science and Technology of China, Hefei, China}
\email{hengz@mail.ustc.edu.cn}
\subjclass[2020]{Primary 53C42; Secondary 53C21, 57R42}
\keywords{Constant scalar curvature, Alexandrov immersion, Newton transformation,
partial convexity}
\hypersetup{
  pdftitle={Immersed hypersurfaces with positive first Newton transformation},
  pdfsubject={Immersed fillings and constant scalar curvature},
  pdfkeywords={constant scalar curvature, hypersurface, Alexandrov immersion,
    Newton transformation, partial convexity}}

\begin{document}
\begin{abstract}
We prove that every closed cooriented hypersurface immersion in
$\R^{n+1}$, $n\ge 2$, with positive definite first Newton transformation
bounds a compact immersed manifold with the prescribed boundary map
and outward coorientation. Combined with the rigidity results of Ros
and Pinkall, this filling theorem implies that every closed connected
hypersurface immersed in $\R^{n+1}$ with constant scalar curvature
is a round sphere.
\end{abstract}
\maketitle

\section{Introduction}\label{sec:introduction}

In the problem section of \emph{Seminar on Differential Geometry},
Yau posed the following interesting conjecture
\cite[Problem~31, p.~677]{Yau1982}.

\begin{conjecture}\label{conj:yau}
Every closed connected hypersurface immersed in $\R^{n+1}$, $n\ge2$,
with constant scalar curvature is a round sphere.
\end{conjecture}

Several important cases of Conjecture~\ref{conj:yau} have been
established. Cheng and Yau proved the convex case
\cite[Theorem~2]{ChengYau1977}. Ros established the embedded case
and the corresponding rigidity results for higher order mean
curvatures \cite{Ros1987,Ros1988}; see also
\cite{Korevaar1988,MontielRos1991}.

Further results for immersed hypersurfaces impose additional
curvature or symmetry assumptions. Ecker and Huisken
\cite{EckerHuisken1989} treated constant Weingarten curvature under
nonnegative sectional curvature; Li \cite{Li1996} obtained
pinching results, and Zheng \cite[Proposition~2]{Zheng1997}
considered nonnegative Ricci curvature. Cheng \cite{Cheng2002}
proved rigidity for compact locally conformally flat hypersurfaces
in dimensions $n>3$, while Okayasu \cite{Okayasu2005} treated
generalized rotational hypersurfaces associated with
cohomogeneity-two orthogonal actions. Related local structure
results in the conformally flat class appear in
\cite{Gururaja2026}. More recently, Li \cite{FaguiLi2026}
proved the conjecture in dimension three and obtained
higher-dimensional results under additional curvature assumptions.

The remaining case is that of general immersed hypersurfaces in
dimensions $n\ge4$, without additional curvature or symmetry
assumptions. In this paper, we resolve this remaining case and hence
confirm Conjecture~\ref{conj:yau} in all dimensions $n\ge2$.
More precisely, we prove the following theorem.

\begin{theorem}\label{thm:csc}
Let $F:M^n\to\R^{n+1}$, $n\ge2$, be an immersion of a nonempty
closed connected manifold. If the induced scalar curvature $\Scal$
is constant, then $\Scal>0$ and $F$ is a diffeomorphism onto a round
sphere of radius
$$
 r=\sqrt{\frac{n(n-1)}{\Scal}}.
$$
\end{theorem}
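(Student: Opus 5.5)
By the Gauss equation, the scalar curvature of $F$ is $\Scal = H_1^2-|A|^2 = 2\sigma_2(\kappa)$, where $\kappa$ are the principal curvatures and $H_1=\sigma_1(\kappa)$ is the unnormalized mean curvature. The plan has three steps: show $\Scal>0$, show that the first Newton transformation $T_1 = H_1\Id - A$ is definite, and then combine the filling theorem announced in the abstract with the Ros/Pinkall rigidity for immersions bounding an immersed domain.

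\emph{Step 1: positivity of $\Scal$.} Since $M$ is compact, there is a point where $|F|^2$ attains its maximum, say $R^2$. At that point $M$ is tangent to the sphere of radius $R$ and lies inside it. By comparison, all principal curvatures with respect to the inner normal are at least $1/R$. Hence $\sigma_2(\kappa)\ge \binom n2 R^{-2}>0$ there, so the constant $\Scal$ is positive.

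\emph{Step 2: definiteness of $T_1$.} Suppose $\sigma_2>0$ everywhere. Then $\kappa$ lies in the Gårding cone component $\Gamma_2^+$ or in $-\Gamma_2^+$. The identity $H_1^2=2\sigma_2+|A|^2>0$ shows $H_1$ never vanishes, so on a connected $M$ its sign is constant. Choose the unit normal, and hence the coorientation, so that $H_1>0$; then $\kappa\in\Gamma_2^+$. For $\kappa\in\Gamma_2^+$ the standard fact $\partial\sigma_2/\partial\kappa_i=\sigma_1(\kappa|i)>0$ gives that $T_1$, whose eigenvalues are $H_1-\kappa_i$, is positive definite. The same normalization matches the point of Step 1, where $H_1>0$ for the inner normal. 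The filling theorem must be applied with the opposite, outward coorientation, so I will flip signs as the statement of that theorem requires.

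\emph{Step 3: filling and rigidity.} The filling theorem gives a compact immersed $(n+1)$-manifold $\Omega$ with $\partial\Omega=M$, boundary map $F$, and outward coorientation. In other words, $F$ is an Alexandrov immersion. Ros's argument then applies verbatim, since it only needs integration over an immersed domain. It uses the Heintze--Karcher type inequality $\int_M \frac{n}{H}\ge (n+1)\mathrm{Vol}(\Omega)$ with $H$ the normalized mean curvature, together with the Minkowski formulas $\int_M H_1 = c\int_M \langle F,\nu\rangle$ and $\int_M \sigma_2 = c'\int_M H_1\langle F,\nu\rangle$. Constant $\sigma_2$ and the Newton--Maclaurin inequality $H_1/n\ge (\sigma_2/\binom n2)^{1/2}$ then force equality, so $M$ is totally umbilic. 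A closed connected totally umbilic immersed hypersurface is a round sphere, and $F$ is a covering of that sphere, hence a diffeomorphism since $S^n$ is simply connected for $n\ge2$. The radius is read off from $\Scal=n(n-1)/r^2$. (Pinkall's contribution handles the immersed Heintze--Karcher step.)

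\emph{Main obstacle.} The whole difficulty sits in the filling theorem: showing that a closed immersion with $T_1>0$ bounds an immersed manifold. Given that result, as the abstract indicates, the only delicate point left in this proof is the coorientation and sign bookkeeping. Specifically, one must check that the positive-$T_1$ coorientation from Step 2 is the one for which the filling's outward normal makes the Heintze--Karcher inequality and the Minkowski formulas carry the correct signs.
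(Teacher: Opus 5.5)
Your proposal is correct and is essentially the paper's proof. Steps~1--2 reproduce Lemma~\ref{lem:admissibility}: you obtain $P_1>0$ from the G{\aa}rding cone, where the paper simply notes $H>|A|\ge|\kappa_i|$. Step~3 is Theorem~\ref{thm:filling} followed by Proposition~\ref{prop:ros-pinkall}, and the sign bookkeeping you worry about is automatic, because with $A=D\nu$ the normal with $H>0$ is the one with $P_1>0$, and Theorem~\ref{thm:filling} makes it the outward normal of the filling.

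Two small repairs are needed. First, the sharp Heintze--Karcher inequality is $\int_M n/\sigma_1\,d\mu\ge(n+1)\mathrm{Vol}(\Omega)$ with the \emph{unnormalized} $\sigma_1$; with your normalized $H$, the extra factor $n$ makes it too weak to force equality. Second, a closed immersed hypersurface need not be orientable a priori, so you should define the global normal pointwise as the one with $\sigma_1>0$ (the paper takes $\nu=-\Delta_MF/|\Delta_MF|$), rather than appealing to the sign of $\sigma_1$ being constant.
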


In particular, the induced metric is round. For a disconnected
source, the conclusion holds on each connected component, with the
same radius if the scalar curvature is constant on the whole source.

The key to Theorem~\ref{thm:csc} is the existence of a compact
immersed filling. In a remark on p.~452 of \cite{Ros1987}, Ros
credited Pinkall with the observation that the integral rigidity
argument also applies whenever the hypersurface bounds a compact
immersed manifold. Thus, to prove Theorem~\ref{thm:csc}, it suffices
to construct such a filling for every immersion under consideration,
rather than assume its existence a priori. We obtain the required
filling from a more general geometric theorem involving the first
Newton transformation.

We first fix our conventions. Throughout the paper, all manifolds
and maps are smooth.
An immersion between manifolds of equal dimension is required to
have invertible differential at every point, including boundary
points. For a cooriented immersion $F:M^n\to\R^{n+1}$ with unit
normal $\nu$, we write
$$
 A=D\nu,\qquad H=\operatorname{tr}A,\qquad P_1=H\Id-A.
$$
Thus an outward oriented sphere has positive principal curvatures,
and $H$ is the unnormalized mean curvature. The tensor $P_1$ is the
first Newton transformation \cite{Reilly1973}, and the Gauss equation
gives
$$
 \Scal=H^2-|A|^2.
$$
We identify $A$ with its associated symmetric bilinear form when
evaluating it on two tangent vectors.

In terms of the ordered principal curvatures
$\kappa_1\le\cdots\le\kappa_n$, positivity of $P_1$ is equivalent to
\begin{equation}\label{eq:partial-convexity}
 P_1>0
 \quad\Longleftrightarrow\quad
 \kappa_1+\cdots+\kappa_{n-1}>0.
\end{equation}
In the terminology of partial convexity, this is strict
$(n-1)$-convexity
\cite[Definition~3.8 and Remark~3.12]{HarveyLawson2013}.
It coincides with $2$-convexity when $n=3$ and is weaker than
$2$-convexity when $n\ge4$.

To state the filling theorem precisely, we use the following
definition.

\begin{definition}\label{def:alexandrov}
An \emph{immersed filling} of a cooriented immersion
$F:M^n\to\R^{n+1}$ consists of a compact manifold $W^{n+1}$, an
immersion $G:W\to\R^{n+1}$, and a diffeomorphism
$b:\partial W\to M$ such that $G|_{\partial W}=F\circ b$ and $dG$
maps the outward unit normal of $G^*g_{\mathrm E}$ to $\nu\circ b$.
An immersion admitting such a filling is called an
\emph{Alexandrov immersion}.
\end{definition}

The boundary identification $b$ is 
retained throughout, even when suppressed from the notation.
The filling need not be injective. Interior extension hypotheses
already occur in Alexandrov's sphere theorem for elliptic
Weingarten relations
\cite[\S\,1, conditions~$(\mathrm{II}_0)$ and~$(\mathrm{II})$]
{Alexandrov1962}. Our use of Alexandrov immersion requires
compactness; definitions allowing proper noncompact extensions
also occur \cite{LambertMaderBaumdicker2024}.

Our main filling theorem is the following.

\begin{theorem}\label{thm:filling}
Every closed cooriented immersion $F:M^n\to\R^{n+1}$, $n\ge2$,
with $P_1>0$ admits an immersed filling with the prescribed outward
coorientation.
\end{theorem}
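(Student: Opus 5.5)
The plan is to build the filling as the space-time track of an inward-moving family of hypersurfaces that stays immersed and strictly $(n-1)$-convex. The degenerations of this family are repaired by surgeries whose discarded pieces are convex and therefore filled by balls. The starting observation is that $P_1>0$ forces $H=\tfrac1{n-1}\operatorname{tr}P_1>0$. Hence, for a flow $\partial_tF=-\Phi\,\nu$ with a speed $\Phi>0$ such as $\Phi=H$ (mean curvature flow), the map
$$
 G:M\times[0,T]\to\R^{n+1},\qquad G(x,t)=F_t(x),
$$
has invertible differential everywhere. Indeed $\partial_tG=-\Phi\nu$ is transverse to $dF_t(T_xM)$, so $G$ is an immersion of an $(n+1)$-manifold with boundary. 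Its outward normal along $M\times\{0\}$ is $\nu$, because the flow moves in the $-\nu$ direction. Injectivity is never needed, which is exactly what makes the immersed setting workable.

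The first step is to prove that strict $(n-1)$-convexity is preserved along the flow. Writing $\lambda=\kappa_1+\dots+\kappa_{n-1}$, I would apply Hamilton's tensor maximum principle to the evolution of $A$ under mean curvature flow: the cone $\{\kappa_1+\cdots+\kappa_{n-1}\ge0\}$ is convex and $O(n)$-invariant, and the reaction term $|A|^2A$ preserves it. I would also show a pinching estimate $\lambda\ge\beta H$ with $\beta>0$ depending only on the initial data. Both statements are local, so they hold verbatim for immersions, and $H>0$ is preserved. Thus the track $G$ remains an immersion on every smooth time interval.

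The second step, which I expect to be the main obstacle, is to handle singularities. The plan is to run a Huisken--Sinestrari type flow with surgery adapted to $(n-1)$-convex initial data. Under the pinching $\lambda\ge\beta H$, blow-ups should be convex or of the form $S^{k}\times\R^{n-k}$; in principle $k$ can be small, unlike in the $2$-convex case. The difficulty is that the standard neck-detection and noncollapsing arguments (Andrews, Haslhofer--Kleiner) use embeddedness. I would first try to replace interior noncollapsing by a noncollapsing statement for the immersed filling itself: one considers inscribed balls immersed in $W_t$ rather than in $\R^{n+1}$, and the maximum principle argument is run on the two-point function pulled back to the immersed domain. I would then check that the surgery procedure is local and therefore applies to immersions. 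The procedure cuts along necks, glues in standard caps, and discards high-curvature components, each of which is diffeomorphic to a sphere or to $S^{k}\times S^{n-k}$-type pieces bounding convex or handle-like regions.

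The final step is the assembly of $W$. Each smooth flow interval contributes a collar $M_i\times[t_i,t_{i+1}]$. Each surgery contributes an immersed $(n+1)$-dimensional handle, namely the region between the neck and the two caps, parametrized by nearby normal graphs so that it remains an immersion. Each discarded convex component is filled by an immersed ball obtained from its Gauss-map parametrization. The flow becomes extinct in finite time since $H\ge c>0$. Gluing these pieces along their common boundaries then produces a compact manifold $W$ with an immersion $G$, boundary identification $b$, and outward normal $\nu\circ b$, as required by Definition~\ref{def:alexandrov}.
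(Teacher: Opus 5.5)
\textbf{There is a genuine gap in your Step~2.} Your Step~1 is fine. You have $\operatorname{tr}P_1=(n-1)H$. The pinched cone is convex, $O(n)$-invariant, and preserved by the reaction term $|A|^2A$. The track of a smooth flow is an immersion with outward normal $\nu$. Step~2, however, is an open research problem rather than a proof step.

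Mean curvature flow with surgery is only available under $2$-convexity. The Huisken--Sinestrari version works for immersions when $n\ge3$, so it covers $n=3$ here, and $n=2$ is the locally convex case. For $n\ge4$ your hypothesis is genuinely weaker. A cylinder $S^j\times\R^{n-j}$ of radius $r$ has
\[
\kappa_1+\cdots+\kappa_{n-1}=\frac{j-1}{r}\ge\tfrac12 H,
\]
so every $2\le j\le n$ is an admissible blow-up, along with convex ancient solutions carrying Euclidean factors.

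Neck detection, the surgery itself (cut along an $S^{n-1}$ cross-section and glue in two caps), the canonical neighbourhoods, and the bound on the number of surgeries all use that high-curvature regions look like $S^{n-1}\times\R$ or convex caps. A region modeled on $S^j\times\R^{n-j}$ with $j\le n-2$ has no such cross-section, and no surgery procedure for it exists.

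Step~3 inherits the problem. The discarded components are no longer spheres or $S^1\times S^{n-1}$ tubes, and proving that each one bounds an immersed manifold is the original filling problem. Your noncollapsing device is also circular. The flow track fills only the region between $F_0$ and $F_t$, and an immersed domain $W_t$ bounded by $F_t$ is exactly what is to be constructed. Noncollapsing is not the real obstruction anyway: the immersed $2$-convex theory does not rely on it, whereas the neck geometry is what blocks you. So your proposal does not reach the new cases $n\ge4$.

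The paper avoids all singularity analysis by sweeping with a generic linear height $h$ instead of a flow.
\begin{itemize}
\item On regular intervals: by \eqref{eq:slice-form}, every regular section is an immersed hypersurface of $\R^n$ with $H_t=\langle P_1v,v\rangle/\alpha>0$. A filling of one section is pushed to nearby sections by a cutoff deformation in a collar whose width depends only on the prescribed section (Lemmas~\ref{lem:collar} and~\ref{lem:local-extension}). The space--time track is then an immersed band.
\item At a Morse critical point: $P_1>0$ gives the index restriction \eqref{eq:index-restriction}. This makes the side to be attached connected, which Remark~\ref{rem:connected-side} shows is essential. Lemma~\ref{lem:completion} then isolates the local source domain, which is replaced by the explicit Morse model.
\end{itemize}
The only degenerations are therefore Morse points with explicit local models, and $(n-1)$-convexity enters exactly through these two facts.
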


The curvature hypothesis in Theorem~\ref{thm:filling} imposes no
additional restriction in the constant scalar curvature problem.
Indeed, Lemma~\ref{lem:admissibility} shows that every nonempty
closed connected hypersurface immersion with constant scalar
curvature has $\Scal>0$ and admits a global unit normal for which
$P_1>0$. Theorem~\ref{thm:filling} therefore provides the required
compact immersed filling, and the Ros--Pinkall rigidity argument
(Proposition~\ref{prop:ros-pinkall}) yields
Theorem~\ref{thm:csc}.

It is useful to compare Theorem~\ref{thm:filling} with earlier
filling results. Our construction is related to Gromov's method of
linear sections \cite[\S\,1/2, pp.~23--25]{Gromov1991}. The
hypothesis there counts nonnegative principal curvatures, rather
than imposing the sum condition \eqref{eq:partial-convexity}.
In \cite[\S\,5.3(c)]{Gromov2014}, Gromov also states an immersed
filling theorem under a principal-curvature sum condition.
Writing $N$ for the ambient dimension, every closed cooriented
strictly $(N-k)$-mean convex immersed hypersurface in a complete
Riemannian $N$-manifold with nonnegative sectional curvature, with
$k>N/2$, bounds a compact Riemannian manifold whose isometric
immersion into the ambient manifold extends the prescribed
boundary immersion. Here strict $(N-k)$-mean convexity means
positivity of the sum of the smallest $N-k$ principal curvatures.
In our setting, $N=n+1$, whereas
\eqref{eq:partial-convexity} involves the smallest $n-1=N-2$
principal curvatures. Thus Gromov's stated range requires a
stronger curvature hypothesis when $n\ge3$.

Huisken and Sinestrari
\cite[Corollary~1.2]{HuiskenSinestrari2009} constructed immersed
handlebody fillings for closed $2$-convex hypersurfaces of dimension
$n\ge3$. These filling results, combined with Ros's argument, also
underlie the work of F.~Li \cite{FaguiLi2026} cited above.
In dimension three, the curvature condition in
Theorem~\ref{thm:filling} agrees with $2$-convexity; in dimensions
$n\ge4$, the theorem provides fillings under the weaker condition
\eqref{eq:partial-convexity}. This is the extension of the filling
theory needed for the remaining cases of
Conjecture~\ref{conj:yau}. For related work on the topology of
manifolds with partially convex boundary, see
\cite{Sha1986,Wu1987,HarveyLawson2013}.

The proof of Theorem~\ref{thm:filling} follows a generic linear
height. Positivity of $P_1$ ensures that every regular hyperplane
section has positive mean curvature. Howard's rolling theorem
\cite{Howard1999} supplies an inward collar whose width is
controlled by the geometry of the prescribed section,
independently of the incoming filling. This gives a uniform
continuation step along each compact interval of regular heights.

At a critical height, we attach the missing side of the local graph
to complete the incoming immersion locally to a covering of a ball.
The inverse branches of this covering identify the source domain
to be replaced. Working with these source domains, rather than
with their possibly overlapping images, avoids identifying
distinct sheets. Together with the uniform collar estimates, the
local replacements allow us to assemble finitely many immersed
bands into a compact filling with the prescribed boundary
parameterization and outward coorientation.

The paper is organized as follows.
Section~\ref{sec:preliminaries} establishes the collar estimates
and the required properties of height sections.
Section~\ref{sec:critical} constructs the extension through a
critical height, and Section~\ref{sec:global} assembles the filling
and proves Theorem~\ref{thm:filling}.
Finally, Section~\ref{sec:rigidity} gives the scalar curvature
reduction and the classical integral method, completing the
proof of Theorem~\ref{thm:csc}.

\section{Collars and height sections}\label{sec:preliminaries}

\subsection{Uniform collars and boundary deformations}\label{sec:collars}

Distances and operator norms on a filling are taken in its pullback
Euclidean metric. A collar is embedded in the source; its image in
Euclidean space may overlap.

\begin{lemma}[Uniform inward collar]\label{lem:collar}
Let $G:\Omega^d\to\R^d$, $d\ge2$, be an immersion of a compact manifold
with boundary. If its outward boundary curvatures satisfy $H>0$ and
$|\kappa_i|\le K$, $K>0$, then for $\delta=(4K)^{-1}$ there is an
inward normal collar $\gamma:\partial\Omega\times[0,\delta]\to\Omega$
with distance coordinate $\sigma$ and boundary projection $\pi$ satisfying
\begin{equation}\label{eq:collar-coordinates}
 G(\gamma(p,\sigma))=G(p)-\sigma\nu(p),\qquad
 |d\sigma|=1,\qquad |D\pi|\le\frac43.
\end{equation}
\end{lemma}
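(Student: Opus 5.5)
The plan is to work in the pullback metric $g=G^*g_{\mathrm E}$. This metric is flat, so geodesics of $(\Omega,g)$ are mapped by $G$ to straight segments, parametrized by arc length. For $p\in\partial\Omega$ I define $\gamma(p,\sigma)$ as the point reached by the $g$-geodesic starting at $p$ with inward initial velocity $-\nu(p)$, here read as the inward unit normal of $\partial\Omega$ in $(\Omega,g)$, followed for time $\sigma$. As long as this geodesic stays in $\Omega$, its $G$-image is the segment $t\mapsto G(p)-t\nu(p)$. This gives the first identity in \eqref{eq:collar-coordinates} for free. Everything then reduces to three facts:
\begin{itemize}
\item[(i)] for $\sigma\le\delta$ the geodesic exists and stays in the interior except at $\sigma=0$;
\item[(ii)] $\gamma$ is a diffeomorphism onto its image;
\item[(iii)] $\sigma$ coincides with $d_g(\cdot,\partial\Omega)$ on the image.
\end{itemize}
Given (iii), $|d\sigma|=1$ follows from the Gauss lemma.

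For (i)--(iii) I will invoke Howard's rolling theorem \cite{Howard1999}. It applies to a compact Riemannian manifold with boundary whose sectional curvature is bounded above (here it vanishes) and whose boundary second fundamental form is bounded by $K$. In that setting it gives inner rolling balls: every $p\in\partial\Omega$ is the boundary point of an embedded metric ball of radius $1/K$ in $\Omega$ centred at $\gamma(p,1/K)$. The hypothesis $H>0$ is what is needed to apply the theorem in the immersed setting. It fixes the side on which $\Omega$ lies, consistently with the outward coorientation. It also rules out the degenerate situation where $\partial\Omega$ contains a component that is concave towards $\Omega$ and has no inner rolling ball; in dimension $d=2$ this is the case of a boundary circle turning the wrong way. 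The main obstacle is exactly this global input, namely that no two normal segments of length $<1/(2K)$ meet. Local considerations only control focal points. I would obtain it from the rolling ball in the standard way. Suppose $q$ has two nearest boundary points, or a nearest point at which the normal segment has length $\sigma<1/(2K)$ without being minimizing. Then the rolling ball at one foot point would contain a boundary point in its interior, which is a contradiction. I expect this step to require care, because the argument has to be carried out in the source, where images may overlap. This is why the whole argument is phrased intrinsically in $(\Omega,g)$ rather than in $\R^d$.

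Once (i)--(iii) hold on $[0,\delta]$ with $\delta=(4K)^{-1}<1/(2K)$, the nearest-point projection $\pi(\gamma(p,\sigma))=p$ is smooth. The derivative bound comes from the standard computation for parallel hypersurfaces. By flatness, the parallel hypersurface $\{\sigma=\mathrm{const}\}$ is parametrized by $p\mapsto\gamma(p,\sigma)$, with differential $\Id-\sigma A$ on $T_p\partial\Omega$ (the sign is fixed by the convention $A=D\nu$ with $\nu$ outward). Since $|\kappa_i|\le K$,
$$
 |D\pi|\le \max_i\frac1{1-\sigma|\kappa_i|}\le\frac1{1-K\delta}=\frac43 .
$$
This bound is on the component tangent to the level sets. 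The component of $D\pi$ along $\partial_\sigma$ vanishes, because $\pi$ is constant along normal segments. This completes \eqref{eq:collar-coordinates}. The collar is embedded in the source, since $\gamma$ is injective by (ii), while its $G$-image may overlap, as allowed.
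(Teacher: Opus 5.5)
Your proposal follows essentially the paper's proof. Both pull back the flat metric and get the focal bound $K^{-1}$ from $\Id-\sigma A$. Both invoke Howard's rolling theorem for global injectivity of the inward normal map: the paper uses it as a lower bound $K^{-1}$ on the inward cut distance plus a first-return argument, while you use it through inner rolling balls, which is equivalent. Both then bound $|D\pi|$ by $\|(\Id-\sigma A)^{-1}\|\le (1-\delta K)^{-1}=4/3$.

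One correction to your commentary. A rolling theorem cannot hold under an upper curvature bound and $|\kappa_i|\le K$ alone, even for embedded domains: the thin annulus $\{1\le|x|\le 1+w\}\subset\R^2$ with $w<2$ has $|\kappa|\le 1$ but no inner disk of radius $1$. So $H>0$ does not serve to fix a side or to handle immersions; it is what rules out such short double normals.
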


\begin{proof}
Equip $\Omega$ with the flat metric $g=G^*g_{\mathrm E}$, which is
complete as a length metric by compactness. The adapted Jacobi field along an inward normal geodesic is
$$
 J_v(s)=\mathcal P_s(\Id-sA)v,
 \qquad v\in T_p\partial\Omega,
$$
where $\mathcal P_s$ denotes parallel transport. Since $|\kappa_i|\le K$,
the operator $\Id-sA$ is nonsingular for $0\le s<K^{-1}$.
Thus the boundary focal distance is at least $K^{-1}$.
Howard's rolling theorem \cite{Howard1999}, applied to each connected
component, gives the same lower bound for the inward cut distance.
A first return to the boundary at length $L$ would give a cut distance
at most $L/2$. It follows that the inward normal exponential map
defines a collar
$$
 \gamma:\partial\Omega\times[0,K^{-1})\longrightarrow\Omega.
$$

Since $G$ is a local isometry, normal geodesics map to straight lines,
so
$$
 G(\gamma(p,\sigma))=G(p)-\sigma\nu(p).
$$
On this collar, $\sigma$ is the distance to $\partial\Omega$, hence
$|d\sigma|=1$. The tangential differential of $\gamma$ is
$\mathcal P_\sigma(\Id-\sigma A)$, while $D\pi$ vanishes in the normal
direction. Therefore, for $0\le\sigma\le\delta=(4K)^{-1}$,
$$
 |D\pi|
 \le \|(\Id-\sigma A)^{-1}\|
 \le \frac{1}{1-\sigma K}
 \le \frac43.
$$
Restricting $\gamma$ to $\partial\Omega\times[0,\delta]$ proves
\eqref{eq:collar-coordinates}.
\end{proof}

A closely related cutoff extension in a normal collar appears in
\cite[Lemma~3 and equation~(4)]{LambertMaderBaumdicker2024}; its continuation
criterion is given in \cite[Corollary~4]{LambertMaderBaumdicker2024}.
Their local extension time depends on a lower bound for the initial
collar injectivity radius. Here Lemma~\ref{lem:collar} supplies such a
bound from the prescribed boundary family alone, independently of the choice of filling.

\begin{lemma}[Uniform local extension]\label{lem:local-extension}
Let $P$ be a closed smooth $(d-1)$-manifold, $d\ge2$, and let
$X_t:P\to\R^d$, $t\in I$, be a smooth family of immersions,
where $I$ is a compact interval. Suppose that the family has smooth
unit normals $\nu_t$ with $H_t>0$. There is $\tau>0$, depending only
on this family, such that for every $a\in I$, every immersed filling
$G_a:\Omega\to\R^d$ of $X_a$ extends to a jointly smooth family
of immersed fillings $G_t:\Omega\to\R^d$ of $X_t$ for
$t\in I$, $|t-a|\le\tau$, starting from the given $G_a$.
\end{lemma}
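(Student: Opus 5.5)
The plan is to apply Lemma~\ref{lem:collar} uniformly in $t$, then build the extension by a cutoff modification of $G_a$ inside the normal collar. First I extract constants from the compact family. Since $I$ is compact and $P$ is closed, there are $K>0$ and $C_1>0$ such that $|\kappa_i(t)|\le K$ for all $t\in I$, and
$$
\Bigl|\partial_t X_t\Bigr|+\Bigl|D\partial_t X_t\Bigr|+\Bigl|\partial_t \nu_t\Bigr|\le C_1 .
$$
Here $D$ is measured in the induced metric of $X_t$. This metric varies smoothly and is uniformly comparable to a fixed background metric on $P$. Let $\delta=(4K)^{-1}$. Given a filling $G_a$ of $X_a$, Lemma~\ref{lem:collar} supplies an inward collar $\gamma:P\times[0,\delta]\to\Omega$. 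In this collar $G_a(\gamma(p,\sigma))=X_a(p)-\sigma\nu_a(p)$, $|d\sigma|=1$, and $|D\pi|\le 4/3$. The essential point is that $\delta$ and these bounds depend only on the family, not on $G_a$. I identify $\partial\Omega$ with $P$ via $b$.

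Next I define the extension. Fix a cutoff $\chi:[0,\infty)\to[0,1]$ with $\chi=1$ on $[0,\delta/3]$, $\chi=0$ on $[2\delta/3,\infty)$, and $|\chi'|\le 4/\delta$. On the collar set
$$
G_t(\gamma(p,\sigma))=X_a(p)-\sigma\nu_a(p)+\chi(\sigma)\Bigl[(X_t(p)-X_a(p))-\sigma(\nu_t(p)-\nu_a(p))\Bigr].
$$
Outside the collar set $G_t=G_a$. This is jointly smooth in $(x,t)$ and equals $G_a$ at $t=a$. Near $\sigma=0$ it equals $X_t(p)-\sigma\nu_t(p)$. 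Hence $G_t|_{\partial\Omega}=X_t$. Moreover $\partial_\sigma G_t=-\nu_t$ there, so the outward normal of $G_t^*g_{\mathrm E}$ maps to $\nu_t$. This gives the required boundary coorientation for an immersed filling, with the same $b$.

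The main step is a uniform immersion estimate, which fixes $\tau$. I write $G_t=G_a+E_t$ on the collar. Here $E_t$ is the bracketed term times $\chi(\sigma)$, and it vanishes off the collar. Since $G_a$ is a local isometry for $g=G_a^*g_{\mathrm E}$, it suffices to show $\|dE_t\|_g<1$ everywhere. Then $dG_t=dG_a+dE_t$ stays invertible, including at boundary points. I estimate $dE_t$ by the chain rule in the coordinates $(p,\sigma)$, using $|d\sigma|=1$ and $|D\pi|\le 4/3$ from \eqref{eq:collar-coordinates}. The tangential derivatives of $X_t-X_a$ and $\nu_t-\nu_a$ are bounded by $C_1|t-a|$. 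The $\sigma$-derivative contributes $|\chi'|$ times the bracket plus $|\nu_t-\nu_a|$, and this is at most $(4/\delta+1)(1+\delta)C_1|t-a|$. Hence
$$
\|dE_t\|_g\le C_2|t-a|,
$$
where $C_2$ depends only on $K$ and $C_1$. Choosing $\tau<C_2^{-1}$, and also small enough that $t$ stays in $I$, completes the proof.

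I expect the only delicate point to be that the collar bounds hold for every filling $G_a$. Otherwise the collar width could shrink with $G_a$ and $\tau$ would not be uniform. This is exactly what Howard's rolling theorem provides inside Lemma~\ref{lem:collar}, via the hypothesis $H_t>0$ on the boundary family. A smaller point is that the norms of $D(X_t-X_a)$ and $D(\nu_t-\nu_a)$ must be taken with respect to the boundary metric of $X_a$. These norms are uniformly comparable over $I$, so this only affects constants.
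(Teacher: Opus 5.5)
Your proposal is correct and is essentially the paper's proof: you use a cutoff correction supported in the filling-independent collar of width $(4K)^{-1}$ from Lemma~\ref{lem:collar}, where $|d\sigma|=1$ and $|D\pi|\le\frac43$ give $\|D(G_t-G_a)\|_{g_a}\le C|t-a|$. The only difference is that you also subtract $\sigma(\nu_t-\nu_a)$, which makes the outward coorientation explicit, whereas the paper adds only $\chi(\sigma/\delta)(X_t-X_a)\circ\pi$ and obtains $\nu_t$ by continuity from $t=a$; the cost is that you also need a uniform bound on the tangential derivative of $\partial_t\nu_t$, which your constant $C_1$ should include.
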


\begin{proof}
Identify $\partial\Omega$ with $P$.
For a fixed metric $h_0$ on $P$, compactness gives uniform constants
$c,K>0$ and $B_0,B_1<\infty$ such that
$$
 X_t^*g_{\mathrm E}\ge c h_0,\qquad |\kappa_i(t)|\le K,\qquad
 |\partial_tX_t|\le B_0,\qquad
 |d_P\partial_tX_t|_{h_0}\le B_1.
$$
Apply Lemma~\ref{lem:collar} to $G_a$ with $\delta=(4K)^{-1}$.
Choose a smooth cutoff $\chi:[0,\infty)\to[0,1]$, equal to one near
zero and zero near and above one. On the collar set
\begin{equation}\label{eq:boundary-extension}
 G_t=G_a+\chi(\sigma/\delta)(X_t-X_a)\circ\pi,
\end{equation}
and keep $G_a$ elsewhere. In the metric $g_a=G_a^*g_{\mathrm E}$,
\begin{equation}\label{eq:extension-estimate}
 |D(G_t-G_a)|_{g_a}
 \le\left(\frac43c^{-1/2}B_1+
 \frac{\|\chi'\|_\infty B_0}{\delta}\right)|t-a|.
\end{equation}
Choose $\tau$ to make the right-hand side less than $1/2$. Since $DG_a$
is an isometry for $g_a$, each $DG_t$ is invertible. The boundary map
is $X_t$, and its outward normal agrees with $\nu_t$ by continuity
from $t=a$.
\end{proof}

\begin{corollary}\label{cor:homotopy}
Existence of an immersed filling is preserved along a smooth regular
homotopy of closed cooriented hypersurfaces with positive mean curvature.
\end{corollary}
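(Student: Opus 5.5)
The plan is to reduce the statement directly to Lemma~\ref{lem:local-extension} with $d=n+1$. Suppose $X_t:M\to\R^{n+1}$, $t\in I=[0,1]$, is a smooth regular homotopy of closed cooriented immersions. The coorientations $\nu_t$ vary smoothly in $t$, since a regular homotopy carries a continuous choice of unit normal, and by hypothesis $H_t>0$ for all $t$. Then the family satisfies every hypothesis of Lemma~\ref{lem:local-extension} with $P=M$. So there is a $\tau>0$ depending only on the family $\{X_t\}$ such that any immersed filling of $X_a$ extends to immersed fillings of $X_t$ for all $|t-a|\le\tau$, $t\in I$. The key point is that $\tau$ does not depend on the filling. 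That is exactly what the uniform collar of Lemma~\ref{lem:collar} provides, because its width $(4K)^{-1}$ is controlled by the curvature bound of the boundary family alone.

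The continuation argument then runs as follows. Suppose $X_0$ admits an immersed filling $(W,G_0,b)$. Choose a partition $0=t_0<t_1<\cdots<t_N=1$ with mesh at most $\tau$. Apply Lemma~\ref{lem:local-extension} at $a=t_0$ to get fillings $G_t$ on the same manifold $W$ for $t\in[t_0,t_1]$. Then apply it again at $a=t_1$, starting from $G_{t_1}$, and so on. After $N$ steps this gives an immersed filling of $X_1$ on the same source $W$, with the same boundary identification $b$.

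At each step I would check that the outward coorientation is preserved. Lemma~\ref{lem:local-extension} asserts that the outward normal of $G_t$ agrees with $\nu_t$, by continuity from the initial time. By symmetry of the argument, reversing $t$ shows the converse direction: a filling of $X_1$ yields one of $X_0$. So existence of a filling is preserved in both directions.

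The only real obstacle is the uniformity of $\tau$. A naive argument would fail here, because the collar width could degenerate along the iteration as the fillings change. This is settled by the fact, recorded in Lemma~\ref{lem:local-extension}, that $\tau$ depends only on the constants $c,K,B_0,B_1$ of the boundary family, and these are uniform by compactness of $M\times I$. A minor point is that Lemma~\ref{lem:collar} needs $H>0$ on the boundary of each intermediate filling; this holds because those boundaries are exactly the $X_t$.
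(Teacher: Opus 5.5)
Your proposal is correct and follows the paper's proof: subdivide the parameter interval into steps no longer than the filling-independent $\tau$ of Lemma~\ref{lem:local-extension}, and restart from each endpoint filling; $\tau$ stays fixed because the boundary at every stage is the prescribed family. Your remark that reversing the parameter gives the converse direction is a harmless addition, and it is the direction actually used at the end of the proof of Theorem~\ref{thm:filling}.
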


\begin{proof}
Subdivide the compact parameter interval into intervals of length at
most $\tau$ from Lemma~\ref{lem:local-extension}, using each endpoint
filling as the next initial filling. The boundary remains the prescribed
family, so the same time step applies at every stage.
\end{proof}

Related collar perturbations and extensions of Alexandrov immersions
appear in \cite{HauswirthKilianSchmidt2015,LambertMaderBaumdicker2024}.
The estimate \eqref{eq:extension-estimate} is the uniformity needed below.

\subsection{A generic height}\label{sec:genericity}

\begin{proposition}\label{prop:genericity}
Every closed cooriented immersion $F:M^n\to\R^{n+1}$ with $P_1>0$ admits
an arbitrarily small $C^2$ perturbation $\widetilde F$ for which some
linear height is Morse, has distinct critical values, and has a unique
preimage at each critical image. The linear homotopy from $F$ to
$\widetilde F$ consists of cooriented immersions with $P_1>0$.
\end{proposition}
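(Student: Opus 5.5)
The plan is to combine openness of the condition $P_1>0$ in the $C^2$ topology with two standard genericity facts: height functions of a fixed immersion are generically Morse, and a small perturbation can separate critical values and critical images.

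\emph{Step 1: openness.} Since $M$ is compact, $P_1$ has a uniform positive lower bound, say $P_1\ge\lambda\,\Id$ with $\lambda>0$. The first fundamental form, the unit normal and the shape operator depend continuously on the $2$-jet of the immersion, uniformly over $M$. Hence there is $\eps>0$ such that every map $\widetilde F$ with $\|\widetilde F-F\|_{C^2}<\eps$ is an immersion. Its unit normal, chosen to be close to $\nu$, gives a compatible coorientation, and it still satisfies $P_1>\lambda/2>0$. The $C^2$ ball is convex, so the linear homotopy $(1-s)F+s\widetilde F$ stays in it. The homotopy therefore consists of cooriented immersions with $P_1>0$, which proves the last assertion once $\widetilde F$ lies in this ball.

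\emph{Step 2: a Morse height.} For a unit vector $e$, the critical points of $h_e=\langle F,e\rangle$ are the points where $\nu=\pm e$. Such a critical point is nondegenerate exactly when $\pm e$ is a regular value of the Gauss map $\nu:M\to S^n$, because the Hessian of $h_e$ there is $\pm A$. By Sard's theorem, almost every $e$ is a regular value of both $\nu$ and $-\nu$, and for such $e$ the function $h_e$ is Morse. We fix one such $e$; $h_e$ then has finitely many critical points $p_1,\dots,p_m$.

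\emph{Step 3: separating values and images.} We perturb $F$ locally near each $p_j$ by
$$\widetilde F=F+\sum_j\phi_j\,c_j\,e,$$
where the $\phi_j$ are bump functions with disjoint small supports, each equal to $1$ near $p_j$, and the $c_j$ are small constants. The function $h_e\circ\widetilde F=h_e+\sum_j c_j\phi_j$ has the same critical points near the $p_j$, with the same Hessians. Outside a neighbourhood of the $p_j$, $dh_e$ is bounded below, so for small $c_j$ no new critical points appear. Choosing the $c_j$ generically makes the critical values $h_e(p_j)+c_j$ pairwise distinct.

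Uniqueness of the preimage at each critical image also follows. If $\widetilde F(q)=\widetilde F(p_j)$, then $h_e(\widetilde F(q))=h_e(\widetilde F(p_j))$. Since $q\mapsto\widetilde F(q)$ is a translation along $e$ only near the $p_k$, either $q$ lies outside all supports or $q$ lies in some support. In the first case we can additionally use a small horizontal translation $\phi_j w_j$, with $w_j\perp e$ generic, to move $F(p_j)$ off the compact image of the remaining region within the level set. This is possible because $F^{-1}$ of a point is finite and the relevant exceptional set of translations has measure zero by a dimension count: the level set has dimension $n-1$ inside an $n$-dimensional hyperplane. In the second case, distinct critical values already separate different $p_k$. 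Points $q$ near $p_j$ itself are excluded by local injectivity of an immersion.

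All of these perturbations have arbitrarily small $C^2$ norm, so Step 1 applies.

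The main obstacle is the bookkeeping in Step 3: showing that generic translation parameters avoid the coincidences $\widetilde F(q)=\widetilde F(p_j)$ for $q$ in regions where the perturbation is non-constant. I would handle this with a transversality argument for the finite-dimensional family $(c_j,w_j)$, using the parametric transversality theorem for the evaluation map.
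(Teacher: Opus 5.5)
Your strategy can be completed, but the argument you give for the second case of Step~3 does not work. Distinctness of the critical values only excludes $q=p_k$. It says nothing about the other points $q\in\operatorname{supp}\phi_k$, $k\ne j$, including those where $0<\phi_k<1$, whose heights $h_e(q)+c_k\phi_k(q)$ are not controlled by the critical values.

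This matters because you never make $F$ self-transverse. Two critical points may then have the same image with tangent sheets, and no choice of $e$ prevents it. For example, let $F$ be a connected double cover of the boundary $S^1\times S^{n-1}$ of a thin tubular neighbourhood of a round circle in $\R^{n+1}$, $n\ge3$; this $F$ has $P_1>0$. Every critical image of every height is then such a double point. Write the $k$-th sheet as the graph of $f_k$, with $F(p_j)=F(p_k)=(z_0,f_k(z_0))$. Then $\widetilde F(p_j)$ lies on the shifted $k$-th sheet exactly when $f_k(z_0+w_j-w_k)-f_k(z_0)=c_j-c_k$. Once you use horizontal shifts for your first case, $c_j\ne c_k$ does not exclude this.

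The parametric transversality you mention at the end does close the gap, and it replaces the whole case analysis. Put $\widetilde F_{c,w}=F+\sum_k\phi_k(c_ke+w_k)$ with disjoint supports and $\phi_k(p_j)=\delta_{jk}$. On $\{\phi_j<1\}$ times the parameter space, the map $(q,c,w)\mapsto\widetilde F_{c,w}(q)-\widetilde F_{c,w}(p_j)$ has derivative $(\phi_j(q)-1)\Id$ in $(c_j,w_j)\in\R\times e^\perp\cong\R^{n+1}$. It is therefore a submersion, its zero set has dimension one less than the parameter space, and its projection to the parameters is null. On $\{\phi_j=1\}$, $\widetilde F$ is a translate of $F$, so taking $\operatorname{supp}\phi_j$ inside a set where $F$ is injective forces $q=p_j$. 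Removing also the hyperplanes $h_e(p_i)+c_i=h_e(p_k)+c_k$ leaves admissible parameters arbitrarily close to $0$.

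With this repair, yours is a genuine alternative to the paper's route. The paper first makes $F$ self-transverse by a global perturbation $F+T\Phi$, with $\Phi$ separating points. The double locus is then a compact $(n-1)$-manifold, so the set $E$ of normals at double points is null. Choosing $e$ with $\pm e$ regular values of the Gauss map and $\pm e\notin E$ then already gives unique preimages at the critical points. Only vertical constant shifts on graph charts are needed afterwards; these preserve horizontal injectivity exactly, and a distance bound excludes distant preimages. Your version avoids global self-transversality and any analysis of the double locus, at the cost of horizontal parameters and a transversality argument localized at the critical points. Your Step~1 matches the paper's openness argument.
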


\begin{proof}
Fix a background
metric on $M$. Immersivity and positivity of the symmetric form
$H_fg_f-g_f(A_f\,\cdot,\cdot)$ are open conditions in the $C^2$ topology.
A sufficiently small $C^2$ ball about $F$ therefore consists of
immersions with $P_1>0$, with unit normals chosen close to that of $F$.

First make $F$ self-transverse. To do so, choose a smooth map
$\Phi:M\to\R^L$ that separates points, for example the map formed from
cutoffs $\chi_i$ and cutoff coordinate functions $\chi_i x_i^\alpha$
in a finite atlas, where the sets on which $\chi_i=1$ cover $M$.
For $T\in\operatorname{Hom}(\R^L,\R^{n+1})$, put $F_T=F+T\Phi$.
On $M^{(2)}=(M\times M)\setminus\Delta$, the map
$$
 (T,p,q)\longmapsto F_T(p)-F_T(q)
$$
is a submersion: its parameter derivative sends $S$ to
$S(\Phi(p)-\Phi(q))$ and is onto. Sard's theorem applied to the
projection of its zero set onto the parameter space gives arbitrarily
small $T$ for which $f=F_T$ is self-transverse. This is the standard
parametric transversality argument \cite[Chapter~3, Sections~1--2]{Hirsch1976}.

The ordered double locus
$$
 D_f=\{(p,q)\in M^{(2)}:f(p)=f(q)\}
$$
is then a smooth $(n-1)$-manifold. It is compact: a finite cover by
sets on which $f$ is injective has a Lebesgue number, excluding double
pairs near the diagonal. Let $C\subset S^n$ be the critical value set
of the Gauss map $\nu_f$, and let
$E=\{\nu_f(p):(p,q)\in D_f\}$. Both have measure zero, by Sard's
theorem and the dimension of $D_f$. Choose
$$
 e\notin C\cup(-C)\cup E\cup(-E).
$$
The height $h_0=\langle f,e\rangle$ is critical precisely where
$\nu_f=\pm e$, and at such a point
$$
 \operatorname{Hess}h_0(X,Y)
 =-\langle e,\nu_f\rangle\,g_f(A_fX,Y).
$$
Since $\pm e$ are regular values of $\nu_f$, the height is Morse.
The choice of $e$ also excludes double images at its critical points.
Write these finitely many points as $p_1,\ldots,p_k$.

It remains to separate their heights. Choose pairwise disjoint graph
charts $U_i$ about $p_i$ on which the horizontal projection is injective,
and neighborhoods $W_i\Subset U_i$ of $p_i$. The unique preimage
property gives
$$
 d_i=\operatorname{dist}\bigl(f(p_i),f(M\setminus W_i)\bigr)>0.
$$
Let $\beta_i$ be supported in $U_i$ and equal to one near
$\overline W_i$, and set
$$
 f_a=f+\sum_i a_i\beta_i e,\qquad
 h_a=h_0+\sum_i a_i\beta_i.
$$
For small $a$, the critical points and their Hessians are unchanged:
on each $W_i$ the height is translated by a constant, and on the compact
complement $dh_0$ is bounded away from zero. Taking
$\|f_a-f\|_{C^0}+|a_i|<d_i/2$ excludes new preimages of $f_a(p_i)$
outside $W_i$; inside $U_i$, horizontal injectivity excludes them.
Avoiding the finitely many affine hyperplanes
$h_0(p_i)+a_i=h_0(p_j)+a_j$ makes the critical values distinct.

Put $\widetilde F=f_a$. The total perturbation can be arbitrarily small
in $C^2$, so its entire linear segment with $F$ lies in the ball fixed
at the start.
\end{proof}

At each critical point, horizontal projection gives a local graph. The
unique preimage property and compactness allow a target cylinder meeting
only that graph. We choose these cylinders pairwise disjoint, with
smaller cylinders compactly contained in them. The Morse lemma applies
to the horizontal graph function \cite[Chapter~6, Section~1]{Hirsch1976}.

\subsection{Geometry of the sections}\label{sec:height}

In the height construction below, let $F:M^n\to\R^{n+1}$, $n\ge2$,
be a closed cooriented immersion with $P_1>0$ and with the generic
height supplied by Proposition~\ref{prop:genericity}. Choose
coordinates so that
$$
 F=(X,h):M^n\longrightarrow\R^n\times\R,
 \qquad h=\langle F,e\rangle,
$$
where $e$ is the last coordinate vector. Write $g_0=F^*g_{\mathrm E}$.

At a regular point of $h$, define
$$
 v=\frac{\nabla h}{|\nabla h|},\qquad
 \alpha=|\nabla h|=|\nu-\langle\nu,e\rangle e|,
 \qquad \mu=\frac{\nu-\langle\nu,e\rangle e}{\alpha}.
$$
Then $X_t=X|_{h^{-1}(t)}$ is an immersed hypersurface in $\R^n$ with
specified normal $\mu$. If $B_t$ is its second fundamental form, then
for tangent vectors to $h^{-1}(t)$,
\begin{equation}\label{eq:slice-form}
 B_t=\alpha^{-1}A|_{T h^{-1}(t)},\qquad
 H_t=\frac{H-\langle Av,v\rangle}{\alpha}
     =\frac{\langle P_1v,v\rangle}{\alpha}>0.
\end{equation}
To verify \eqref{eq:slice-form}, differentiate $\mu$ in a direction
tangent to the section and take its inner product with another such
direction. The derivatives of $\alpha$ and of
$\langle\nu,e\rangle$ make no contribution, because those tangent
directions are orthogonal to both $\mu$ and $e$.

On a compact interval of regular values, the flow of
$\nabla h/|\nabla h|^2$ identifies the sections with a fixed compact
source. Equation~\eqref{eq:slice-form} and
Lemma~\ref{lem:local-extension} consequently apply to their prescribed
family. The induced metrics, spatial curvature, and time derivatives
have uniform bounds on any such fixed interval.

Now let a critical value be zero. Its isolated graph can be written
\begin{equation}\label{eq:critical-graph}
 F(j(z))=(z,f(z)),\qquad
 f(\Theta(x,y))=-|x|^2+|y|^2=:q(x,y),
\end{equation}
where $x\in\R^p$, $y\in\R^{n-p}$, $j$ is a source chart, and $\Theta$
is a smooth spatial coordinate change. We use the Morse lemma in the
horizontal variables.
Curvature is computed in the variable $z$; the derivatives
of $\Theta$ and $\Theta^{-1}$ are bounded on the fixed smaller chart.

Write $\lambda=1$ if $\nu$ points upwards on the graph, and
$\lambda=-1$ if it points downwards. The filling is to occupy the
closed side
\begin{equation}\label{eq:graph-side}
 \lambda(f(z)-t)\ge0.
\end{equation}
At the critical point $A=-\lambda\operatorname{Hess}f$ in orthonormal
horizontal coordinates. By \eqref{eq:partial-convexity}, $A$ has at least two positive
eigenvalues. Hence
\begin{equation}\label{eq:index-restriction}
 \lambda=1\quad\Longrightarrow\quad p\ge2.
\end{equation}
For $\lambda=-1$ one likewise has $n-p\ge2$.

We will use an incoming section at $a=-\eps$. On $q=-\eps$ one has
$|(x,y)|\ge\sqrt\eps$, and nondegeneracy gives
$|df|\ge c\sqrt\eps$ on the fixed graph chart. Since
$\alpha=|df|/(1+|df|^2)^{1/2}$ there, while $A$ is uniformly bounded,
\eqref{eq:slice-form} gives
\begin{equation}\label{eq:critical-curvature}
 |B_{-\eps}|\le C\eps^{-1/2}.
\end{equation}
Away from a fixed smaller neighborhood of the critical point,
$|\nabla h|$ is bounded below on a fixed sufficiently short height
interval, so the same estimate holds on the entire incoming section.
All constants here depend only on the prescribed immersion and the fixed
critical neighborhood. Lemma~\ref{lem:collar} therefore gives every
compact immersed filling of that section with the specified outward
normal a collar of width
\begin{equation}\label{eq:critical-collar}
 \delta=c_0\sqrt\eps,
 \qquad |d\sigma|=1,\qquad |D\pi|\le\frac43,
\end{equation}
after fixing $c_0>0$ sufficiently small. The constants are independent of the incoming filling.

\section{Extension through a critical height}\label{sec:critical}

\subsection{A local decomposition by completion}\label{sec:completion}

The following lemma identifies the part of an incoming filling adjacent
to its prescribed boundary. A closed smooth domain is understood
relative to the ambient manifold.

\begin{lemma}[Completion]\label{lem:completion}
Let a connected simply connected $n$-manifold $B$ be the union of closed
smooth domains $A_0,D_0$ with common boundary $L$ and disjoint interiors.
Assume that $D_0$ is nonempty and connected. If a proper immersion
$f:E^n\to B$ maps $\partial E$ diffeomorphically onto $L$ and has its
interior on the $A_0$-side there, then, as an immersion over $B$,
$$
 E\cong A_0\sqcup\bigsqcup_{j=1}^d B
 \qquad\text{for some }d\ge0.
$$
\end{lemma}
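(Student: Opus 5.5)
The plan is to close up $E$ by gluing $D_0$ to it along $L$, and then to recognize the result as a covering of $B$. Throughout I take $B$ to be without boundary; if it has boundary, the same argument runs with $\partial B$ treated as part of the base. Set
$$
 \widehat E = E \cup_{\partial E\cong L} D_0,
$$
where $\partial E$ is identified with $L$ via the diffeomorphism $f|_{\partial E}$. Define $\widehat f:\widehat E\to B$ by $f$ on $E$ and by the inclusion on $D_0$.

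\emph{Step 1: smoothness of the gluing.} Since $f$ is an immersion between manifolds of equal dimension, its differential is invertible up to the boundary. Moreover $f|_{\partial E}$ is a diffeomorphism onto $L$, and the interior of $E$ lies on the $A_0$-side there. Together these give a collar of $\partial E$ in $E$ that $f$ maps diffeomorphically onto a one-sided collar of $L$ in $A_0$. Here injectivity near $\partial E$ follows from injectivity on $\partial E$ and the inverse function theorem, by the standard compactness-free argument on a tubular neighborhood.

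Gluing this collar to a collar of $L$ in $D_0$ by means of $f$ gives $\widehat E$ a smooth structure. In it, a neighborhood of the seam is identified with a two-sided tubular neighborhood of $L$ in $B$. Hence $\widehat E$ is an $n$-manifold without boundary, and $\widehat f$ is a local diffeomorphism everywhere. I expect this step to be the main technical point: one must check that the interior of $E$ and $D_0$ sit on opposite sides of $L$, so that the glued map is locally injective across the seam. This is exactly what the hypothesis on the $A_0$-side provides.

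\emph{Step 2: $\widehat f$ is a finite covering.} Take a compact set $K\subset B$. Its preimage is $f^{-1}(K)\cup(K\cap D_0)$. The first set is compact because $f$ is proper. The second is compact because $D_0$ is closed in $B$. So $\widehat f$ is a proper local diffeomorphism between manifolds without boundary, and hence a covering map with finite fibers. Because $B$ is connected and simply connected, every component of $\widehat E$ maps diffeomorphically onto $B$. Fibers are finite, so there are finitely many components.

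\emph{Step 3: identifying $E$.} The set $D_0$ is connected and nonempty, so it lies in a single component $\widehat E_0$. Every component of $\partial E$ is glued to $D_0$, so all of $\partial E$ lies in $\widehat E_0$ as well. On $\widehat E_0$ the map $\widehat f$ is a diffeomorphism onto $B$ that carries $D_0$ onto $D_0$. Therefore
$$
 \widehat E_0\cap E=\widehat f|_{\widehat E_0}^{-1}(A_0)\cong A_0
$$
over $B$. The remaining components lie entirely in the interior of $E$; there are $d\ge0$ of them, each mapped diffeomorphically onto $B$. Combining these gives
$$
 E\cong A_0\sqcup\bigsqcup_{j=1}^d B,
$$
which is the claimed decomposition.
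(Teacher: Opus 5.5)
Your proposal is correct and follows the paper's proof essentially step for step. You attach $D_0$ to $E$ along $(f|_{\partial E})^{-1}$ and note that the glued map is a proper local diffeomorphism, hence a finite covering of the simply connected $B$. You then read off $A_0$ from the covering component containing the connected set $D_0$, with the other components being full copies of $B$ in $\operatorname{int}E$.

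The one difference is that you first build a global collar of $\partial E$ on which $f$ is injective. For noncompact $\partial E$ that step needs $f|_{\partial E}$ to be a closed embedding, which holds here because $L$ is closed in $B$; injectivity on $\partial E$ plus the inverse function theorem is not enough on its own. The paper avoids the collar altogether: a one-sided inverse chart of $f$ at each seam point already gives the glued space its smooth structure.
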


\begin{proof}
Attach a copy of $D_0$ to $E$ along
$\sigma=(f|_{\partial E})^{-1}:L\to\partial E$. The maps $f$ and the
inclusion of $D_0$ induce a map $\pi:Z=E\cup_\sigma D_0\to B$.
Gluing along this homeomorphism of closed boundaries gives a Hausdorff,
second countable space. At a point of the seam, a one-sided inverse
chart of $f$ on the $A_0$-side and the attached $D_0$-side together map
onto a target neighborhood. These charts make $Z$ a smooth manifold
without boundary and $\pi$ a local diffeomorphism. Only the prescribed
boundary points are identified.

For compact $K\subset B$, its inverse image in $Z$ is the image of
$$
 f^{-1}(K)\sqcup(D_0\cap K),
$$
which is compact. Thus $\pi$ is proper. A proper local diffeomorphism
has finite fibers and evenly covered neighborhoods, and its nonempty
image is both open and closed. It is therefore a finite covering of $B$.
Since $B$ is simply connected, each covering component maps
diffeomorphically onto $B$ \cite[Section~1.3]{Hatcher2002}.

The attached copy of $D_0$, being connected and nonempty, lies in one
covering component $C_0$. Under $C_0\cong B$, it is precisely $D_0$,
so the part of $E$ in $C_0$ is $A_0$. Every other component is a full
copy of $B$ contained in $\operatorname{int}E$. Their inverse maps
give the asserted disjoint embedded summands.
\end{proof}

\begin{corollary}[Morse domains]\label{cor:morse-completion}
Let $q_0(x,y)=-|x|^2+|y|^2$ on $\R^p\times\R^{n-p}$, and choose
$\eps>0$ and $\sqrt\eps<\rho<R$. For $\lambda\in\{1,-1\}$ put
$$
 A_\lambda=\{\lambda(q_0+\eps)\ge0\}\cap B_R,
 \qquad L=\{q_0=-\eps\}\cap B_R.
$$
Let $G:\Omega^n\to\R^n$ be a compact immersed filling whose entire
boundary over $B_R$ maps diffeomorphically onto $L$, with interior on
the $A_\lambda$-side. If $\lambda=1$, assume $p\ge2$. Then, over $B_R$,
$$
 G^{-1}(B_R)\cong A_\lambda\sqcup\bigsqcup_{j=1}^d B_R.
$$
The inverse branches restrict over $\overline B_\rho$ to disjoint
compact embedded manifolds with corners.
\end{corollary}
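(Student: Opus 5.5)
We apply Lemma~\ref{lem:completion} with $B$ the open ball $B_R$ (connected and simply connected), taking $A_0=A_\lambda$ and $D_0=\{\lambda(q_0+\eps)\le0\}\cap B_R$ as the complementary closed domain. Both are closed smooth domains relative to $B_R$ with common boundary $L$, since $-\eps$ is a regular value of $q_0$ and $L\ne\emptyset$ because $\sqrt\eps<R$. Their interiors are disjoint. The map is $f=G|_{E}$ with $E=G^{-1}(B_R)$. This is an open subset of $\Omega$ containing the boundary points over $B_R$. It is an $n$-manifold with boundary $\partial E=\partial\Omega\cap G^{-1}(B_R)$, which maps diffeomorphically onto $L$ by hypothesis. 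The restriction $G:E\to B_R$ is proper because $\Omega$ is compact: the preimage of a compact $K\subset B_R$ is closed in $\Omega$. The outward coorientation of the filling places the interior on the $A_\lambda$-side. The lemma then gives $E\cong D_0^c$-complement $A_\lambda\sqcup\bigsqcup_j B_R$, provided $D_0$ is nonempty and connected.

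\textbf{Main obstacle: connectedness of $D_0$.} The nonemptiness of $D_0$ is clear. For connectedness, consider the two cases.

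If $\lambda=1$, then $D_0=\{|y|^2+\eps\le|x|^2\}\cap B_R$. Any point $(x,y)$ in it can be joined by the segment $s\mapsto(x,sy)$, $s\in[1,0]$, which stays in $D_0$ and in the ball. This reduces to the set $\{(x,0):\sqrt\eps\le|x|<R\}$, which is an annulus in $\R^p$ and is connected precisely because $p\ge2$. This is exactly where the hypothesis $p\ge2$, coming from \eqref{eq:index-restriction}, enters.

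If $\lambda=-1$, then $D_0=\{|x|^2\ge|y|^2+\eps\}^c$, that is, $\{q_0+\eps\ge0\}$ with the roles of the two sides reversed. More precisely, $D_0=\{|y|^2+\eps\ge|x|^2\}\cap B_R$. Radially shrinking $x$ via $(sx,y)$ keeps us in $D_0$, and it contracts onto $\{x=0\}\cap B_R$, a ball, which is connected. No index hypothesis is needed in this case.

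\textbf{Restriction to $\overline B_\rho$.} For the last sentence, restrict the decomposition to $\overline B_\rho$. Each inverse branch over $B_R$ is an embedding into $\Omega$, so its image over $\overline B_\rho$ is either $A_\lambda\cap\overline B_\rho$ or $\overline B_\rho$. The summands are disjoint in the source, and these are compact embedded pieces. For the corner structure, $\overline B_\rho$ is a smooth closed ball. The set $A_\lambda\cap\overline B_\rho$ is bounded by the quadric $L$ and the sphere $\partial B_\rho$, and these meet transversally. This holds because on $L$ the gradients $(-2x,2y)$ and $(x,y)$ are parallel only if $x=0$ or $y=0$ with a sign condition, which would give $|y|^2=-\eps$ or $|x|^2=\eps$. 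In the second case, with $x$ radial, parallelism at $\rho>\sqrt\eps$ fails, since there $|x|=\rho\ne\sqrt\eps$.

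I expect the transversality check of $L$ and $\partial B_\rho$ to need a short computation. Parallelism of $(-x,y)$ and $(x,y)$ forces $x=0$ or $y=0$. The case $x=0$ contradicts $q_0=-\eps$, and the case $y=0$ forces $|x|^2=\eps<\rho^2$, so there is no point on the sphere. Hence the intersection is a smooth codimension-two corner, and the pieces are manifolds with corners.
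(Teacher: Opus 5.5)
Your proposal is correct and follows essentially the same route as the paper: apply Lemma~\ref{lem:completion} on $B_R$ with the complementary closed side as $D_0$, check that this side is connected (this is where $p\ge2$ enters for $\lambda=1$), and check that $L$ meets $\partial B_\rho$ transversely. Your straight-line retractions replace the paper's product description of $\{q_0<-\eps\}\cap B_R$ and its star-shapedness argument for $\{q_0>-\eps\}\cap B_R$. One small slip: $L\ne\emptyset$ fails when $p=0$, which is allowed for $\lambda=-1$, where $A_{-1}=L=\varnothing$. Nothing in your argument uses nonemptiness of $L$, however, and this degenerate case, which the paper treats separately, still yields only full copies of $B_R$.
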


\begin{proof}
The restriction of $G$ to $G^{-1}(B_R)$ is proper, since $\Omega$ is
compact. The level $q_0=-\eps$ is regular. Its two closed sides are
therefore smooth domains, and the complement of
$\operatorname{int}A_\lambda$ is the side to attach in
Lemma~\ref{lem:completion}.

The domain $U=\{q_0>-\eps\}\cap B_R$ is nonempty and star-shaped
about zero. For $p\ge1$, the other open side
$V=\{q_0<-\eps\}\cap B_R$ is diffeomorphic to
$S^{p-1}\times B_b^{n-p}\times(0,1)$, where $b^2=(R^2-\eps)/2$.
Indeed, writing $x=r\theta$, its defining inequalities are
$$
 |y|^2+\eps<r^2<R^2-|y|^2,
$$
and the position of $r^2$ between these endpoints supplies the last
coordinate. Hence the side to attach is connected: it is the closure
of $V$ when $\lambda=1$, and of $U$ when $\lambda=-1$. If $p=0$ in the
latter case, $A_{-1}=L=\varnothing$ and the conclusion is a union of
full balls. Lemma~\ref{lem:completion} gives the decomposition.

All inverse branches are defined on the larger open ball $B_R$.
Their restrictions to $\overline B_\rho$ are compact and remain
disjoint, including on the cutting sphere. The genuine face
$q_0=-\eps$ meets that sphere transversely: dependence of the normals
$(-x,y)$ and $(x,y)$ would force $y=0$ and $\rho^2=\eps$.
Thus the restricted pieces are manifolds with corners.
\end{proof}

\begin{remark}\label{rem:connected-side}
Connectedness of the attached side is essential. For $p=1$, write
$a(y)=\sqrt{|y|^2+\eps}$ and consider the two domains
$$
 \{x\ge-a(y)\}\cap B_R,\qquad
 \{x\le a(y)\}\cap B_R,
$$
with inclusion on each. Their disjoint union is a proper immersion
with one copy of $L$ as boundary and interior on the $U$-side, but
neither source piece contains a full-ball inverse branch. Attaching
the two components of the $V$-side completes different covering
components.
\end{remark}

The decomposition is local over $B_R$, not a decomposition into global
components of $\Omega$. It does not require the individual sides to be
simply connected: the covering is trivialized over the whole ball.
The statement is unchanged by a target diffeomorphism $\Theta$.
In particular, it applies over a Morse chart $\Theta(B_R)$, and the
larger chart provides inverse branches near the closed cutting domain
$\Theta(\overline B_\rho)$.

\subsection{The critical extension}\label{subsec:critical-extension}

For regular values $a<b$, an \emph{immersed band} for $F$ over $[a,b]$
is a compact manifold with corners $Z^{n+1}$ and an immersion
$\mathcal G:Z\to\R^n\times[a,b]$. Its boundary consists of the end
faces over $a,b$, which fill the corresponding sections, and a lateral
face identified by a diffeomorphism $\beta$ with $h^{-1}([a,b])$.
On that face, $\mathcal G=F\circ\beta$ with the prescribed outward
coorientation.
These identifications agree on the end sections. The only corners are
the transverse intersections of the lateral face with the time end
faces. A boundary face may have several connected components.

\begin{proposition}\label{prop:critical-extension}
For some $\tau_*>0$, suppose $h$ has a single critical point in
$h^{-1}([-\tau_*,\tau_*])$, nondegenerate and of height zero, whose
image has a unique preimage. For every sufficiently small $\eps>0$,
any immersed filling of the section at $-\eps$ is the bottom face of
an immersed band over $[-\eps,\eps]$. The upper bound on $\eps$
depends only on $F$ and the fixed critical neighborhood, not on the
incoming filling.
\end{proposition}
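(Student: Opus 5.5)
We build the band by separating the critical chart from its complement. Fix the Morse chart: a source chart $j$, the coordinate change $\Theta$, and radii $\rho<R$ such that the target cylinder $\Theta(B_R)\times[-\tau_*,\tau_*]$ meets $F(M)$ only in the graph \eqref{eq:critical-graph}. Away from $j(\Theta(B_\rho))$, the height $h$ has no critical points on $[-\tau_*,\tau_*]$, so $|\nabla h|$ is bounded below there. We then handle the two pieces separately.

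\emph{Outside the chart.} Here we extend by the regular continuation. Let $G_{-\eps}:\Omega\to\R^n$ be the incoming filling. By \eqref{eq:critical-collar} it has a collar of width $c_0\sqrt\eps$ that does not depend on $G_{-\eps}$. The horizontal boundary motion over $[-\eps,\eps]$ has speed $|\partial_tX_t|\le C|\nabla h|^{-1}$. This speed is bounded outside the chart. Inside the chart, along the graph, the section $\{q=t\}$ moves by $O(\sqrt\eps)$ in the $(x,y)$ variables once $|(x,y)|\ge\rho'$ for fixed $\rho'$, and it moves by only $O(\eps)$ once we stay a fixed distance from the origin. So we apply the cutoff construction \eqref{eq:boundary-extension} only on the part of the collar lying outside $\Theta(B_{\rho})$. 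There the motion is $O(\eps)\ll c_0\sqrt\eps$, and the estimate \eqref{eq:extension-estimate} with $\delta=c_0\sqrt\eps$ gives a right-hand side of order $\eps/\sqrt\eps\to0$. This keeps $DG_t$ invertible for all $|t|\le\eps$ once $\eps$ is small, uniformly in the incoming filling. This uniformity is exactly what makes the smallness of $\eps$ depend only on $F$.

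\emph{Inside the chart.} Here we replace the piece by a model. Corollary~\ref{cor:morse-completion}, applied over $\Theta(B_R)$ with $\lambda$ from \eqref{eq:graph-side} and $p\ge2$ when $\lambda=1$ by \eqref{eq:index-restriction}, gives
$$
G_{-\eps}^{-1}(\Theta(B_R))\cong A_\lambda\sqcup\bigsqcup_j B_R .
$$
The inverse branches over $\Theta(\overline B_\rho)$ are disjoint embedded pieces with corners. The band is the product $\Omega\times[-\eps,\eps]$, with $\Omega$ modified over the chart. We excise the $A_\lambda$-branch over $\Theta(\overline B_\rho)$. In its place we insert the model region
$$
\{(z,t):\lambda(f(z)-t)\ge0,\ |t|\le\eps,\ z\in\Theta(\overline B_\rho)\}\subset\R^n\times[-\eps,\eps],
$$
with the identity as immersion. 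Its lateral face is the graph $j(\Theta(\overline B_\rho))\cap h^{-1}([-\eps,\eps])$, its bottom face is the excised branch, and the outward normal is correct by \eqref{eq:graph-side}. The full-ball branches are simply kept as products. We glue the model to the outer extension along $\partial\Theta(B_\rho)\times[-\eps,\eps]$. On a neighbourhood of that sphere, the outer extension must equal the identity inclusion of $\{\lambda(f-t)\ge0\}$. To arrange this, we make the cutoff region and the collar straightening compatible there: in the branch coordinates, near the cutting sphere, the collar is the affine normal collar of the nearly flat sections, so the deformation \eqref{eq:boundary-extension} can be chosen to agree with the graph-side region. The top face is then an immersed filling of the section at $\eps$, and the lateral face is identified with $h^{-1}([-\eps,\eps])$. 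The corners are exactly the transverse intersections at $t=\pm\eps$, and they are transverse because $\pm\eps$ are regular values.

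\emph{Main obstacle.} We expect the hardest step to be the seam matching at $\partial\Theta(B_\rho)$. There the outer collar deformation, built from the nearest-point projection $\pi$ of $G_{-\eps}$, has to coincide with the model region after excision, and the deformed map has to remain an immersion. We would first reduce the cutoff extension to a fixed annulus $\rho<|\Theta^{-1}z|<R$ on the $A_\lambda$-branch. On that annulus the branch is embedded, so we could replace \eqref{eq:boundary-extension} by the explicit product diffeomorphism carrying $\{q\ge-\eps\}$ to $\{q\ge t\}$, generated by a vector field transverse to the level sets and cut off in $\sigma$. This map equals the identity-model near $|\cdot|=\rho$ and agrees with the collar extension near $|\cdot|=R$.
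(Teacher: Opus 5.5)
\paragraph{There is a genuine gap at the seam.} Your outline follows the paper's overall structure. You isolate the partial branch over $\Theta(\overline B_\rho)$ with Corollary~\ref{cor:morse-completion}, excise it, and insert the graph-side model. You then move the rest of the filling by a collar correction whose $C^1$ size $O(\eps)$ is beaten by the collar width $c_0\sqrt\eps$. However, the step you flag as the main obstacle is not resolved, and the fix you sketch does not work as stated.

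Your model is cut along the \emph{fixed} sphere $\partial\Theta(B_\rho)$. Its artificial face is therefore $\{(z,t): z\in\partial\Theta(B_\rho),\ \lambda(f(z)-t)\ge0\}$, which is not a product in $t$. The excised outer piece, by contrast, has the product face $i(J)\times[-\eps,\eps]$.

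Read literally, a map that ``equals the identity-model near $|\cdot|=\rho$'' cannot carry $\{q\ge-\eps\}$ onto $\{q\ge t\}$ there. Read charitably, the outer map near the seam is $(y,t)\mapsto(\phi_t(G(y)),t)$, glued by $(i(z),t)\sim(\phi_t(z),t)$. This requires $\phi_t$ to carry $\{f=-\eps\}$ to $\{f=t\}$ \emph{and} to preserve $\partial\Theta(B_\rho)$. A field that is merely transverse to the level sets does not preserve the sphere.

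The same defect affects your exterior step. The gradient-type flow with speed $|\nabla h|^{-1}$ does not map the part of $h^{-1}(-\eps)$ outside the cut onto the part of $h^{-1}(t)$ outside the cut. Moreover, $X_t-X_{-\eps}$ does not vanish near the cut. So \eqref{eq:boundary-extension} cannot be applied to ``part of the collar'' unless you interpolate with the annulus map in a way that keeps boundary values on the moving section. That in turn needs the two boundary parametrizations to coincide on the overlap. Without these, the band is not well defined at the seam, and the lateral face is not shown to be exactly $h^{-1}([-\eps,\eps])$.

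\paragraph{How the paper handles the seam.} The missing idea is a single ambient flow $\psi_s$ of a horizontal field $Y$ with $df(Y)=1$ on an annulus around $\partial K$. It is paired with a source field $W$ that agrees with $j_*Y$ there, so that $\Phi_s\circ j=j\circ\psi_s$ near the seam. The construction then runs as follows.
\begin{itemize}
\item The model is parametrized by $(z,t)\mapsto(\psi_{t+\eps}(z),t)$ on $\{\lambda(f\circ\psi_{t+\eps}-t)\ge0\}\subset K\times[-\eps,\eps]$. Its cutting face is exactly $J\times[-\eps,\eps]$, because $f\circ\psi_s=f+s$ on the annulus. In effect, the cut moves with the flow.
\item The whole exterior map is composed with $\psi_{t+\eps}$.
\item Only the residual $\psi_{-s}\circ X\circ\Phi_s-X$ is corrected in the collar.
\end{itemize}
This residual is $O(\eps)$ in $C^1$ for the induced metric, uniformly in the incoming filling. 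It also vanishes identically on an annulus, so it extends by zero to the whole closed section. Taking $\delta<d_0$, the correction leaves $G^{-1}(K_+)$ untouched. Hence the two maps agree on open neighbourhoods of the seam, not just on the seam itself.

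\paragraph{How your fixed-cut scheme could be repaired.} You could keep the fixed cut along these lines. Choose the transporting field tangent to the concentric spheres with $df=1$. This is possible because $\eps<\rho^2$ makes the relevant level sets transverse to $\partial\Theta(B_\rho)$. Then choose the exterior source flow to agree with this field on the overlap. These conditions must be imposed and checked explicitly; they are the actual content of the step.
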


\begin{proof}
Put $a=-\eps$, $b=\eps$, and write the incoming filling as
$G:\Omega^n\to\R^n$. The construction replaces its partial graph
domain while leaving the other local sheets intact.

\smallskip\noindent\emph{1. The incoming decomposition.}
Choose an isolated graph cylinder as in \eqref{eq:critical-graph},
and choose nested Morse-coordinate balls compactly contained in it.
Fix
$$
 K=\Theta(\overline B_\rho)
$$
and a larger open domain $\Theta(B_R)$, $0<\rho<R$, with fixed annular
neighborhoods of $\partial K$. These choices precede that of $\eps$.
Shrink the height window so that its part over $\Theta(B_R)$ meets only
the isolated graph, and require $\eps<\rho^2$.

Over $\Theta(B_R)$ the entire boundary of the incoming filling is
one copy of $f=-\eps$. The filling occupies the closed side
$\lambda(f+\eps)\ge0$. By \eqref{eq:index-restriction} and
Corollary~\ref{cor:morse-completion}, the inverse image of this larger
open domain is a disjoint union of full copies and the prescribed
partial domain. In particular, there is an embedding
\begin{equation}\label{eq:partial-inverse}
 i:P_0\longrightarrow\Omega,\qquad
 P_0=\{z\in K:\lambda(f(z)+\eps)\ge0\},\qquad G\circ i=\Id,
\end{equation}
whose inverse branch extends across the artificial cutting boundary.
On $f=a$ it agrees with the given source identification. The domain
$P_0$ may be disconnected or empty; every other piece over $K$ is a
full ball in $\operatorname{int}\Omega$.

\smallskip\noindent\emph{2. The exterior flow.}
Choose a smooth compactly supported horizontal vector field $Y$ which
vanishes near the critical point and outside the graph neighborhood,
and satisfies $df(Y)=1$ on a fixed annulus containing all the chosen
neighborhoods of $\partial K$. This can be obtained from
$\nabla f/|\nabla f|^2$ by a cutoff. Let $\psi_s$ be its ambient flow.
After decreasing a fixed permissible time, on a smaller annulus still
containing $\partial K$ and its buffers we have
\begin{equation}\label{eq:stationary-annulus}
 f(\psi_s(z))=f(z)+s.
\end{equation}
All fixed order derivatives of $\psi_s$ are uniformly bounded there.

On the regular part of a slightly larger fixed height window, with
a smaller critical neighborhood deleted, choose a height transverse
field $V$, with $dh(V)=1$, agreeing with $j_*Y$ on the corresponding
annulus. A partition of unity preserves the affine condition
$dh(V)=1$. Extend it, with a cutoff near the critical point and outside
the fixed height interval, to a smooth field $W$ on the compact source
$M$.

To control both forward and backward exterior trajectories, choose graph
domains $O_-\Subset O_0\Subset\operatorname{int}K$, containing the
critical point, and fixed numbers $0<\tau_1<\tau_2<\tau_*$. Arrange
$O_0\subset\operatorname{int}\psi_s(K)$ for the small times in use.
Put
$$
 E_* = \{|h|\le\tau_1\}\setminus j(O_0),\qquad
 N = \{|h|<\tau_2\}\setminus j(\overline O_-).
$$
Arrange $E_*\Subset N$ and $W=V$ near $\overline N$, so $dh(W)=1$
on $N$. In the metric $g_0=F^*g_{\mathrm E}$, set
$$
 d_* =\operatorname{dist}_{g_0}(E_*,M\setminus N)>0,
 \qquad B_* =\sup_M|W|_{g_0}<\infty.
$$
If $E_*$ is empty, the exterior construction is vacuous and $d_*$ may
be any positive number. Otherwise choose $\eps<\tau_1$ and
$2\eps B_*<d_*$. Every exterior point considered below lies in $E_*$,
so its trajectory under the global flow $\Phi_s$ of $W$ stays in $N$
for $|s|\le2\eps$. In particular, these trajectories are height
transverse in both time directions.

For $0\le s\le2\eps$ define the actual exterior cut section
$$
 E_s=h^{-1}(a+s)\setminus
 \{j(z):z\in\operatorname{int}\psi_s(K),\ f(z)=a+s\}.
$$
The moving seam is
$$
 S_s=\{j(\psi_s(z)):z\in\partial K,\ f(z)=a\}.
$$
On a neighborhood of that seam, agreement of the vector fields gives
\begin{equation}\label{eq:matching-flows}
 \Phi_s(j(z))=j(\psi_s(z)).
\end{equation}
The field $W+\partial_s$ is tangent to the moving seam, so uniqueness
prevents trajectories from crossing it in either time direction. Hence
\begin{equation}\label{eq:exterior-diffeomorphism}
 \Phi_s:E_0\longrightarrow E_s
 \quad\text{is a diffeomorphism.}
\end{equation}
Surjectivity follows from the backward flow on $E_s$. In the definite
Morse cases the seam is empty and the component born or lost lies
entirely inside the moving domain.

\smallskip\noindent\emph{3. The boundary correction.}
Consider the smooth auxiliary maps on the fixed compact source
$$
 \mathcal X_s(p)=\psi_{-s}(X(\Phi_s(p))),\qquad \mathcal X_0=X.
$$
Smoothness on a fixed compact set gives
\begin{equation}\label{eq:global-displacement}
 \|\mathcal X_s-X\|_{C^1(M,g_0)}\le C|s|.
\end{equation}

On the incoming exterior $E_0$ put
$D_t=(\mathcal X_{t-a}-X)|_{E_0}$. It vanishes identically on a fixed open
annulus by \eqref{eq:matching-flows}. Extend it by zero over the inner
graph part of $P=h^{-1}(a)$. This gives a smooth function on all of
$P$, with
\begin{equation}\label{eq:boundary-displacement}
 \|D_t\|_\infty+\|dD_t\|_{X_a^*g_{\mathrm E}}\le C\eps.
\end{equation}
The metric in this estimate is exactly the restriction of $g_0$:
for $v\in TP=\ker dh$, $|DF(v)|=|DX(v)|$. The zero extension is smooth
because its transition lies in the annulus where $D_t=0$.

Choose $\rho<r_+<r_{\mathrm{stat}}<R$ with the outer radii in this
zero annulus, and put $K_+=\Theta(\overline B_{r_+})$. By isolation of
the graph and the zero extension,
$$
 X(\operatorname{supp}D_t)
 \subset\R^n\setminus\Theta(B_{r_{\mathrm{stat}}}).
$$
Thus the support has a fixed positive target distance from $K_+$:
\begin{equation}\label{eq:support-gap}
 d_0=\operatorname{dist}\bigl(K_+,
           \R^n\setminus\Theta(B_{r_{\mathrm{stat}}})\bigr)>0.
\end{equation}

Use the incoming collar \eqref{eq:critical-collar} and set
\begin{equation}\label{eq:critical-correction}
 \widehat G_t=G+\chi(\sigma/\delta)D_t\circ\pi
\end{equation}
there, keeping $G$ elsewhere. In $g=G^*g_{\mathrm E}$,
\begin{equation}\label{eq:critical-smallness}
 |D\widehat G_t-DG|_g
 \le C(\eps+\eps/\delta)
 \le C'(\eps+\sqrt\eps)<\frac12
\end{equation}
for sufficiently small $\eps$. Hence every $\widehat G_t$ is an
immersion. Moreover,
\begin{equation}\label{eq:protected-preimage}
 \widehat G_t=G\quad\text{on }G^{-1}(K_+).
\end{equation}
Indeed, a collar point $y$ with nonzero correction has a boundary footpoint
$p=\pi(y)$ in the support in \eqref{eq:support-gap}, and
$|G(y)-X(p)|=\sigma(y)\le\delta$. Taking
$\delta<d_0$ excludes its image from $K_+$. Thus \eqref{eq:protected-preimage} holds on every sheet over $K_+$,
and the maps agree on a neighborhood of $G^{-1}(K)$.

\smallskip\noindent\emph{4. Replacement of the partial domain.}
The interior and one-sided boundary inverse charts show that
$i(P_0\cap\operatorname{int}K)$ is relatively open in $\Omega$. Thus
$$
 Q=\Omega\setminus i(P_0\cap\operatorname{int}K)
$$
is compact. It is a manifold with corners, with genuine boundary
$E_0$ and artificial face $i(J)$, where
$$
 J=\{z\in\partial K:\lambda(f(z)+\eps)\ge0\}.
$$
These are embedded source faces by Step~1. No full interior copy is
removed.

Replace the removed piece by
\begin{equation}\label{eq:replacement-domain}
 \mathcal E=\{(z,t)\in K\times[a,b]:
           \lambda(f(\psi_{t-a}(z))-t)\ge0\},
\end{equation}
with map $T(z,t)=(\psi_{t-a}(z),t)$. On $Q\times[a,b]$ use
\begin{equation}\label{eq:exterior-map}
 (y,t)\longmapsto(\psi_{t-a}(\widehat G_t(y)),t).
\end{equation}
Both differentials are block triangular with invertible spatial blocks
and last diagonal entry one.

On the seam annulus, \eqref{eq:stationary-annulus} yields
\begin{equation}\label{eq:stationary-domain}
 f(\psi_{t-a}(z))-t=f(z)+\eps.
\end{equation}
Consequently the artificial face of $\mathcal E$ is $J\times[a,b]$.
Glue it to $i(J)\times[a,b]$ by $(z,t)\sim(i(z),t)$. Equations
\eqref{eq:partial-inverse} and \eqref{eq:protected-preimage} show that
the maps and their domain descriptions agree in open inverse
coordinate neighborhoods of the seam, not just pointwise on it.

The genuine boundary is smooth: near the critical point $Y=0$, so
its defining function has time derivative $-\lambda\ne0$; away from
that point its spatial differential is nonzero. It meets the cutting
face transversely by Corollary~\ref{cor:morse-completion}, and the time
end faces transversely because $a,b$ are regular. At a triple
intersection, \eqref{eq:stationary-domain} makes the spatial defining
functions independent of time, so their differentials and $dt$ are
independent. Gluing removes the artificial face and its corners.

The pieces and glued faces are compact, and the closed face
identification gives a compact Hausdorff quotient. The inverse charts
at the seam make it a smooth manifold with corners and define an
immersion, denoted by $(Z,\mathcal G)$.

\smallskip\noindent\emph{5. Boundary identification and coorientation.}
On the exterior lateral boundary define
$$
 \beta(p,t)=\Phi_{t-a}(p),\qquad p\in E_0,
$$
and on the genuine boundary of $\mathcal E$ define
$$
 \beta(z,t)=j(\psi_{t-a}(z)).
$$
They agree on an open seam neighborhood by
\eqref{eq:matching-flows}. The first map covers exactly the exterior
part of each source section by \eqref{eq:exterior-diffeomorphism};
the second covers exactly its part in the isolated graph. After gluing,
these maps therefore define a bijection onto $h^{-1}([a,b])$. Its inverse on
the graph, including at the critical point, is explicitly
$$
 j(z')\longmapsto
 \bigl(\psi_{f(z')-a}^{-1}(z'),f(z')\bigr).
$$
Outside the graph it is the backward source flow. These formulas prove
that $\beta$ is a diffeomorphism. Equations
\eqref{eq:critical-correction} and \eqref{eq:exterior-map} give
$\mathcal G=F\circ\beta$ on the exterior, and the same identity is
immediate on the graph.

On the graph the inequality \eqref{eq:graph-side} gives exactly the
prescribed outward normal, including at the critical point. On the
exterior the spatial outward normal starts as $\mu_a$ and continues
through immersions, so it is the prescribed $\mu_t$. For a regular
boundary parameterization $(X_t(p),t)$, the full outward normal is a
positive multiple of
$$
 \bigl(\mu_t,-\langle\partial_tX_t,\mu_t\rangle\bigr).
$$
Since this vector is $\nu/\alpha$ along the original immersion, its
coorientation is also the required one.

\smallskip\noindent\emph{6. End faces and choice of the interval.}
At $t=a$ we have $D_a=0$, $\psi_0=\Id$, and $\mathcal E_a=P_0$.
Thus the bottom face restores exactly the original source, map, and
boundary identification. At $t=b$ all artificial faces have been
removed and the genuine section is regular, so the top face is a
compact smooth immersed filling with the asserted boundary and
coorientation.If the incoming filling is empty, the collar and exterior
correction are absent and the same local construction applies.

After fixing the neighborhoods, fields, and permissible flow times,
choose $\eps$ so that
$$
 \eps<\min\{\rho^2,\tau_1\},\qquad
 2\eps B_*<d_*,\qquad
 c_0\sqrt\eps<d_0,\qquad
 C'(\eps+\sqrt\eps)<\tfrac12,
$$
and so that $2\eps$ is smaller than those permissible flow times.
All constants depend only on the prescribed immersion and the fixed
neighborhoods. The interval is therefore independent of the incoming
filling.
\end{proof}

\section{Proof of the filling theorem}\label{sec:global}

We first glue the bands from Section~\ref{sec:critical} along regular
end sections. The source boundary identification determines which
branches meet at a self-intersection.

\begin{lemma}[Gluing bands]\label{lem:end-gluing}
Let $Z_-$ and $Z_+$ be immersed bands for the same cooriented immersion
$F$ over $[a_-,a]$ and $[a,a_+]$, where $a$ is a regular height.
A diffeomorphism between their end faces at $a$ that preserves the
immersions and the source boundary identifications glues them to a
smooth immersed band over $[a_-,a_+]$.
\end{lemma}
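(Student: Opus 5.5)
The plan is to form the topological gluing $Z=Z_-\cup_\phi Z_+$, where $\phi$ identifies the top face $\partial_a Z_-$ with the bottom face $\partial_a Z_+$. I would then use the height to produce collar neighbourhoods on each side, so that the glued space has a smooth structure on which $\mathcal G_-\cup\mathcal G_+$ is smooth. The key observation is that both band maps take values in $\R^n\times[a_-,a]$ and $\R^n\times[a,a_+]$. Hence the last coordinate $t$ of each immersion is a defining function for its end face at $a$.

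First I set up product collars. Because each band map is an immersion into $\R^{n+1}$ with $t\circ\mathcal G$ having nonvanishing differential near the end face (the face is $t^{-1}(a)$ and $a$ is a regular endpoint), the pullback metric $\mathcal G^*g_{\mathrm E}$ gives a vector field $\partial_t$-lift. Near the interior of the face I take $\widetilde\nabla t/|\widetilde\nabla t|^2$. Near the corner, where the lateral face meets the end face, I need the field to be tangent to the lateral face. There I use the field $W$ pushed forward through $\beta$: the lateral face is $h^{-1}([a_\pm,a])$, and the height-transverse field $V$ from Section~\ref{sec:height} (the flow of $\nabla h/|\nabla h|^2$ on the regular interval around $a$) satisfies $dh=1$. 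I extend it inward as a field $T$ with $dt(T)=1$, tangent to the lateral face, and combine the pieces by a partition of unity, which preserves the affine condition. The flow of $T$ then gives a collar $\partial_aZ_\pm\times(a-\eta,a]$, respectively $[a,a+\eta)$, in which $\mathcal G(y,t)=(\mathcal G^{\mathrm{sp}}_t(y),t)$.

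Crucially, I would choose the \emph{same} boundary vector field on both sides. On the lateral face both sides use the pushforward of the single source field $V$ on $M$ near $h^{-1}(a)$, and $\beta_\pm$ agree on $h^{-1}(a)$ through $\phi$. The glued lateral face is therefore smoothly parametrized by $h^{-1}([a_-,a_+])$, and the identifications $\beta_-\cup\beta_+$ define a smooth map. That it is a diffeomorphism follows because both pieces are diffeomorphisms, they agree on the seam, and the source flow of $V$ gives smooth coordinates across $h^{-1}(a)$.

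Next I glue the charts. Near a seam point, the map $\mathcal G_-$ is a local diffeomorphism from a one-sided neighbourhood onto $\{t\le a\}$ near the image point, and likewise $\mathcal G_+$ onto $\{t\ge a\}$. Since $\phi$ preserves the immersions, these two one-sided inverse branches match on $t=a$. I define the chart on the union by the target coordinates themselves, using the local inverse of $\mathcal G_-\cup\mathcal G_+$. This is automatically smooth and makes $\mathcal G$ a local diffeomorphism, so no collar compatibility of higher derivatives is needed. The same argument applies at corner points with the lateral face: in target coordinates both sides are the region bounded by the image of $F$ near $F(\beta(p))$, on the side determined by the common coorientation $\nu$, which is a smooth domain across $t=a$ because $F$ is smooth there.

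Finally, $Z$ is compact and Hausdorff as a quotient of compact spaces along closed faces. Its only corners lie over $a_\pm$, and $\mathcal G=F\circ\beta$ holds on the lateral face. The outward coorientation is inherited from each side, and the end faces at $a_\pm$ are unchanged.

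The main obstacle is the corner seam: showing that the target-coordinate charts from the two sides define the same smooth half-space structure there. To handle this I would use that, near the seam, the boundary is the single sheet $F(\beta(\cdot))$ chosen by the source identification. This keeps overlapping sheets from being confused, and gives injectivity of the glued inverse branch locally.
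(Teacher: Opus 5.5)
Your proof is correct and is essentially the paper's argument. The smooth structure on the quotient comes from the target charts: at interior seam points the two one-sided inverse branches glue to a full neighborhood, and at corner points the source identification selects a single branch of $F$, whose $\nu$-determined side is a half-space $\{r\ge0\}$ that is smooth across $t=a$. Your preliminary collar construction with the field $T$ is unnecessary, since in the target charts the glued lateral identification is locally $(F|_U)^{-1}\circ\mathcal G$, so its smoothness across the seam is immediate.
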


\begin{proof}
The quotient is compact and Hausdorff because the end faces are closed.
At an interior point of the seam, the immersion charts on the two
pieces give opposite time half-neighborhoods in the target and glue
to a full neighborhood.

At a boundary point of the seam, the source identifications select
the same local branch of $F$. Regularity of the height gives target
coordinates $(u,r,t)$ in which that branch is $r=0$ and its prescribed
interior is $r\ge0$. The corner charts are
$$
 \{r\ge0,\ t\le a\},\qquad \{r\ge0,\ t\ge a\},
$$
and their union is the half-neighborhood $r\ge0$. These target charts
define a smooth immersion on the quotient and a smooth identification
of its lateral boundary with the source of $F$. The common end face
and its corners disappear.
\end{proof}

\begin{proof}[Proof of Theorem~\ref{thm:filling}]
If $M$ is empty, take $W$ empty. Henceforth assume that $M$ is nonempty.
First suppose $F$ has the generic height described in
Proposition~\ref{prop:genericity}. Let its finitely many critical
values be $c_1<\cdots<c_N$. Before constructing any filling, choose
isolated graph neighborhoods and sufficiently small, pairwise
disjoint critical intervals
$$
 [c_i-\eps_i,c_i+\eps_i],\qquad 1\le i\le N,
$$
for Proposition~\ref{prop:critical-extension}. The choices are
possible in advance because its constants do not depend on the
incoming filling. Take regular heights below the minimum and above
the maximum of $h$.

Start with the empty filling below the minimum. On each intervening
compact regular interval, identify the sections by a height-transverse
source flow. They have $H_t>0$ by \eqref{eq:slice-form}. The constants
in Lemma~\ref{lem:local-extension} are uniform on this fixed
prescribed family. Subdivide into finitely many sufficiently short
intervals and construct the corresponding families of fillings.
Each short family gives a compact band with map
$$
 (y,t)\longmapsto(G_t(y),t),
$$
whose differential is invertible by its block triangular form.
Its lateral boundary is the exact source family of height sections.
At a critical interval, apply
Proposition~\ref{prop:critical-extension} to the preceding endpoint
filling and retain the stated source boundary identification for
the filling at the upper endpoint.

The filling obtained at the end of each step, together with its
source boundary identification, serves as the initial filling for
the next step. Lemma~\ref{lem:end-gluing} therefore
glues the resulting bands smoothly. The estimates are unaffected by
the boundary identifications. Indeed,
if the identification at height $a$ is
$\beta:\partial\Omega\to h^{-1}(a)$, the identity
$G|_{\partial\Omega}=X_a\circ\beta$ makes $\beta$ an isometry for
the two induced boundary metrics. Pulling back the prescribed
boundary corrections thus preserves their intrinsic differential
norms exactly.

There are finitely many critical intervals. Every remaining regular
interval has a positive uniform step size depending only on its
prescribed boundary family; reapplication of the collar lemma uses
the same bound because the boundary at each step is still that
family. Hence the construction uses finitely many compact pieces.
Its quotient is compact, with a smooth immersion into
$\R^{n+1}$ and with lateral boundary identified with all of $M$.
Indeed, the source height portions meet only in their common end
sections, where the source points are identified exactly. The resulting
lateral map is a diffeomorphism to $M$ by the charts
in Lemma~\ref{lem:end-gluing}.
Every intermediate time face and every artificial spatial face has
been removed by gluing.

Above the last critical value the boundary section is empty. The corresponding filling must also be empty: otherwise it would be a nonempty
compact manifold without boundary admitting a local diffeomorphism
to $\R^n$, whose image would be both open and compact. The bottom
face was empty from the start. Thus the assembled manifold $W$ has
smooth boundary exactly $M$, its map $G$ restricts to $F$ with the
specified parameterization, and its outward normal is $\nu$.
This proves the theorem for the generic immersion.

For the original immersion, Proposition~\ref{prop:genericity} gives
an arbitrarily small perturbation $\widetilde F$ and a straight-line
regular homotopy back to $F$ preserving $P_1>0$. Since
$\operatorname{tr}P_1=(n-1)H$, this homotopy has $H>0$ with its
transported normal. The generic case provides a
filling of $\widetilde F$ inducing its prescribed outward normal, and
Corollary~\ref{cor:homotopy} transfers it to the original $F$, with
the exact original boundary map.
\end{proof}

\section{Constant scalar curvature}
\label{sec:rigidity}

Write $H_1=H/n$ and $c=\Scal/[n(n-1)]$. All hypersurface integrals
are taken over the source with its induced measure.

The positivity of $\Scal$ and $P_1$, the choice of a global normal with
$H>0$, and the resulting strict $(n-1)$-convexity are also recorded in
\cite[Lemma~2.1 and Remark~2.2]{FaguiLi2026}. We include the standard
approach to fix our conventions.

\begin{lemma}\label{lem:admissibility}
Let $F:M^n\to\R^{n+1}$, $n\ge2$, be an immersion of a nonempty closed
connected manifold with constant scalar curvature. Then $\Scal>0$, and
$F$ has a global unit normal for which $H>0$, $P_1>0$, and
$H_1\ge\sqrt c$.
\end{lemma}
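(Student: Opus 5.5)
The plan is the standard elliptic-point argument combined with the Gårding/Newton inequality and a connectedness argument.

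\emph{Positivity of $\Scal$.} Since $M$ is compact, the function $|F|^2$ attains its maximum at some $p_0$. There every principal curvature with respect to the inward normal is at least $1/|F(p_0)|>0$; in particular $F(p_0)\neq0$ because $M$ is nonempty and of positive dimension. Hence at $p_0$
$$
 \Scal=\sum_{i\ne j}\kappa_i\kappa_j\ge n(n-1)|F(p_0)|^{-2}>0 .
$$
As $\Scal$ is constant, $\Scal>0$ everywhere, so $c>0$.

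\emph{Choice of normal and the inequality for $H_1$.} Since $H^2=\Scal+|A|^2\ge\Scal>0$, the mean curvature of any local unit normal never vanishes. Near each point there are exactly two unit normals, and exactly one of them has $H>0$. These choices patch together into a continuous global unit normal $\nu$ with $H>0$, with no orientability hypothesis needed. Newton's inequality $\sigma_1^2/n^2\ge\sigma_2/\binom n2$ gives $H_1^2\ge c$, and with $H>0$ this yields $H_1\ge\sqrt c$.

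\emph{Positivity of $P_1$.} This is the main step. Write $\sigma_2=\Scal/2>0$ and $\sigma_1=H>0$, so $\kappa$ lies in the closure of the Gårding cone $\Gamma_2$. The claim is that $\kappa$ lies in the open cone itself, where $\partial\sigma_2/\partial\kappa_i=\sigma_1-\kappa_i>0$ for each $i$; these are exactly the eigenvalues of $P_1$. To verify this directly, suppose $H-\kappa_i\le0$ for some $i$. Put $s=H-\kappa_i=\sum_{j\ne i}\kappa_j\le 0$. Then
$$
 \sigma_2=\kappa_i s+\sigma_2(\kappa_{\hat i}),\qquad \sigma_2(\kappa_{\hat i})\le \frac{n-2}{2(n-1)}s^2 .
$$
Also $\kappa_i=H-s\ge H>0$, so $\kappa_i s\le0$. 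If $s=0$, then $\sigma_2\le0$, a contradiction. If $s<0$, then $\kappa_i\ge -s$, which gives
$$
 \sigma_2\le -s^2+\frac{n-2}{2(n-1)}s^2<0,
$$
again a contradiction. Therefore $P_1>0$ at every point. By \eqref{eq:partial-convexity}, this is the same as strict $(n-1)$-convexity.

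The only point requiring care is this last case analysis, namely checking that $H>0$ together with $\sigma_2>0$ forces every $H-\kappa_i$ to be positive. The estimate $\sigma_2(\kappa_{\hat i})\le\frac{n-2}{2(n-1)}s^2$ is Maclaurin's inequality in $n-1$ variables, and it also covers $n=2$, where $\sigma_2(\kappa_{\hat i})=0$.
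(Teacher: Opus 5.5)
Your proof is correct. The farthest-point argument for $\Scal>0$ matches the paper. So does the bound $H_1^2\ge c$: the paper writes Newton's inequality as the identity $H_1^2-c=|A-H_1\Id|^2/[n(n-1)]$.

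The routes differ in two places. For the global normal, the paper takes $\nu=-\Delta_MF/|\Delta_MF|$, using $\Delta_MF=-H\nu$ and $|\Delta_MF|^2=H^2=\Scal+|A|^2>0$. This single formula gives both smoothness and $H=|\Delta_MF|>0$. Your patching argument produces the same normal, and it is smooth rather than merely continuous, since it locally agrees with a smooth normal.

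For $P_1>0$, which you treat as the main step, the paper needs one line. From $H^2=\Scal+|A|^2>|A|^2$ and $H>0$ one gets $H>|A|\ge|\kappa_i|$, so every eigenvalue $H-\kappa_i$ of $P_1$ is positive. Your case analysis through $\sigma_2=\kappa_i s+\sigma_2(\kappa_{\hat i})$ is valid, but this observation makes it unnecessary and avoids the dimension-dependent constants.

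Two minor remarks:
\begin{itemize}
\item Since $\kappa_{\hat i}$ may have entries of both signs, attribute the bound $\sigma_2(\kappa_{\hat i})\le\frac{n-2}{2(n-1)}s^2$ to Newton's inequality, i.e.\ the Cauchy--Schwarz bound $\sum_{j\ne i}\kappa_j^2\ge s^2/(n-1)$. Maclaurin's inequality is normally stated for nonnegative variables.
\item With the paper's convention $A=D\nu$, it is the \emph{outward} radial normal at the farthest point that satisfies $A\ge|F(p_0)|^{-1}\Id$, not the inward one. This sign slip is harmless, because $\Scal$ is even in $A$.
\end{itemize}
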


\begin{proof}
Fix $a\in\R^{n+1}$, and let $p$ maximize
$f=\frac12|F-a|^2$. Put $\rho=|F(p)-a|>0$ and choose a local unit normal
with $\nu(p)=(F(p)-a)/\rho$. With this choice,
$$
 \nabla^2 f(X,Y)
 =\langle X,Y\rangle-\langle F-a,\nu\rangle\langle AX,Y\rangle.
$$
At $p$ the Hessian is nonpositive, so $A(p)\ge\rho^{-1}\Id$.
The Gauss equation therefore gives
$\Scal(p)\ge n(n-1)\rho^{-2}>0$. Constancy implies $\Scal>0$ on $M$.

For either local choice of normal, $\Delta_MF=-H\nu$ and
$|\Delta_MF|^2=H^2=\Scal+|A|^2>0$. Hence
$$
 \nu=-\frac{\Delta_MF}{|\Delta_MF|}
$$
defines a smooth global unit normal and gives $H=|\Delta_MF|>0$.
Since $H>|A|\ge|\kappa_i|$ for every principal curvature $\kappa_i$,
all eigenvalues $H-\kappa_i$ of $P_1$ are positive. Finally,
$$
 H_1^2-c=\frac{|A-H_1\Id|^2}{n(n-1)}\ge0.
$$
\end{proof}

We recall Ros's rigidity argument \cite[Theorems~1 and~2]{Ros1987},
including the extension to immersed fillings attributed there to
Pinkall. The proof uses the Minkowski identity \cite{Hsiung1954,Ros1987}
and the boundary Heintze--Karcher inequality \cite{HeintzeKarcher1978}.
We derive the latter from Reilly's identity \cite{Reilly1977}, following
Ros's Dirichlet method.

\begin{proposition}[Ros--Pinkall]\label{prop:ros-pinkall}
Let $F:M^n\to\R^{n+1}$, $n\ge2$, be an immersion of a nonempty closed
connected manifold with constant scalar curvature. If $F$ extends to an
immersion of a compact manifold bounded by $M$, then $F$ is a
diffeomorphism onto a round sphere.
\end{proposition}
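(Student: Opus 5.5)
By Lemma~\ref{lem:admissibility}, $\Scal>0$, and we may fix the global unit normal $\nu$ for which $H>0$, $P_1>0$ and $H_1\ge\sqrt c$. Let $G:W\to\R^{n+1}$ be the given immersion of a compact manifold with $\partial W=M$. First we arrange that $dG$ carries the outward normal of $W$ to this $\nu$. The outward normal of $W$ is a continuous unit normal along the connected $M$, so it equals $\pm\nu$. If it were $-\nu$, then at a point of $M$ farthest from the origin the filling would lie locally on the outer side of the hypersurface. Since the metric $G^*g_{\mathrm E}$ is flat and $W$ is compact, that contradicts $A\ge\rho^{-1}\Id$ there with respect to $\nu$. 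Alternatively, the filling of Theorem~\ref{thm:filling} already has the right coorientation. The plan is then to play Ros's two integral inequalities against each other on the flat manifold $(W,G^*g_{\mathrm E})$.

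\emph{Minkowski side.} The Hsiung--Minkowski formulas hold on the closed immersed source:
$$
\int_M(H_1+H_1\langle F,\nu\rangle)\,dA=0,\qquad \int_M(H_1+c\langle F,\nu\rangle)\,dA=0 .
$$
They follow from $\Delta_M F=-H\nu$ and from $\operatorname{div}_M(P_1 F^\top)=\operatorname{tr}(P_1)-\operatorname{tr}(P_1A)\langle F,\nu\rangle$, using $\operatorname{tr}P_1A=2\sigma_2=\Scal$ and the divergence-free property of $P_1$ (Codazzi). With $c$ constant, the two formulas give
$$
\int_M\Bigl(\frac{H_1}{\sqrt c}-1\Bigr)\Bigl(\langle F,\nu\rangle\Bigr)\,dA\ldots
$$
More directly, they give $\int_M H_1\langle F,\nu\rangle=-|M|$ and $c\int_M\langle F,\nu\rangle=-\int_M H_1$. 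Meanwhile $\int_M\langle F,\nu\rangle=-(n+1)\operatorname{vol}(W)$ by the divergence theorem on $W$ with outward normal $\nu$, because $\operatorname{div}G=n+1$. Hence
$$
\int_M H_1\,dA=c(n+1)\operatorname{vol}(W).
$$

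\emph{Heintze--Karcher side.} This is the step needing the filling, and I expect it to be the main technical point. On the compact flat Riemannian manifold $W$, solve $\Delta u=1$ with $u|_M=0$, which is possible since the metric is smooth up to the boundary. Reilly's formula in zero curvature reads
$$
\int_W\bigl((\Delta u)^2-|\nabla^2u|^2\bigr)=\int_M H\,u_\nu^2 ,
$$
using $u|_M=0$. Combine it with $|\nabla^2u|^2\ge(\Delta u)^2/(n+1)$ and $\int_M u_\nu=\operatorname{vol}(W)$. Since $H>0$, Cauchy--Schwarz gives $\operatorname{vol}(W)^2\le\int_M H^{-1}\int_M Hu_\nu^2$, and this yields
$$
\int_M\frac{dA}{H_1}\ge(n+1)\operatorname{vol}(W).
$$
Only flatness and positivity of $H$ are used, so the lack of injectivity of $G$ is harmless.

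\emph{Conclusion.} Since $H_1\ge\sqrt c$, we have $\int_M H_1^{-1}\le c^{-1/2}|M|$ and $\int_M H_1\ge\sqrt c\,|M|$. Chaining these with the two identities,
$$
(n+1)\operatorname{vol}(W)\le\int_M H_1^{-1}\le\frac{|M|}{\sqrt c}\le\frac1c\int_MH_1=(n+1)\operatorname{vol}(W),
$$
so every inequality is an equality. Thus $H_1\equiv\sqrt c$, and by the identity in Lemma~\ref{lem:admissibility} $A=H_1\Id$, i.e.\ $M$ is totally umbilic with constant curvature $\sqrt c$. Connectedness then places $F(M)$ in a round sphere of radius $r=c^{-1/2}=\sqrt{n(n-1)/\Scal}$. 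The immersion $F$ is then a local diffeomorphism from a closed manifold onto $S^n_r$, hence a covering, and $S^n$ is simply connected for $n\ge2$, so $F$ is a diffeomorphism.
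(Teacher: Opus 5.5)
Your proposal is correct and follows the paper's proof essentially step for step. The shared steps are: orienting the filling at a farthest point, the Minkowski identity $\int_M H_1\,d\mu=c(n+1)\operatorname{Vol}(W)$, Ros's Reilly-formula proof of $\int_M H_1^{-1}\,d\mu\ge(n+1)\operatorname{Vol}(W)$, and the squeeze forcing $H_1\equiv\sqrt c$ and $A=\sqrt c\,\Id$. Your detour through $|M|$ is equivalent to the paper's pointwise bound $1/H_1\le H_1/c$.

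Two small repairs are needed.

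\emph{Signs.} Your displayed Minkowski and divergence formulas are written in the inner-normal convention. With your outward $\nu$ and $\Delta_MF=-H\nu$, and with $u=\langle F,\nu\rangle$, they read $\int_M(1-H_1u)\,d\mu=0$, $\int_M(H_1-cu)\,d\mu=0$ and $\int_Mu\,d\mu=+(n+1)\operatorname{Vol}(W)$. The two sign flips cancel, so the identity you actually use is unaffected.

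\emph{Orientation step.} At the farthest point, the contradiction is with maximality of $|G|$ on $W$: the maximum is attained on $\partial W$ because a local diffeomorphism has no interior maximum. The bound $A\ge\rho^{-1}\Id$ only serves to identify $\nu$ there with the radial direction.
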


\begin{proof}
Let $G:W^{n+1}\to\R^{n+1}$ be the extension, with $\partial W=M$.
Equip $W$ with the flat metric $g_W=G^*g_{\mathrm E}$. Its boundary
metric is the metric induced by $F$. No component of $W$ can be closed:
the image of such a component under a local diffeomorphism would be both
open and compact in $\R^{n+1}$. Since $M$ is connected, $W$ is connected.
Write $V=\operatorname{Vol}(W)>0$, and let $\eta$ be its outward unit normal.

The normal $dG(\eta)$ agrees with the normal in
Lemma~\ref{lem:admissibility}. To see this, maximize
$\frac12|G-a|^2$ over $W$. A maximum cannot be interior because $G$ is a
local diffeomorphism. At a boundary maximum the radial normal points
outward: differentiation in the inward normal direction has nonpositive
sign, and the nonzero radial vector is orthogonal to the boundary.
The boundary Hessian approach in Lemma~\ref{lem:admissibility} therefore
gives a positive definite shape operator for the outward normal there.
Because $\Scal>0$ makes the outward mean curvature nowhere zero,
connectedness gives $H>0$ everywhere for this normal. We henceforth
identify $dG(\eta)$ with $\nu$.

Set $u=\langle F-a,\nu\rangle$. The pullback of the ambient position
field $G-a$ has covariant derivative $\Id$ on $W$, so the divergence
theorem gives
\begin{equation}\label{eq:rigidity-volume}
 \int_M u\,d\mu=(n+1)V.
\end{equation}
For the tangent field $Z=(F-a)^\top$ on $M$, direct differentiation gives
$\nabla Z=\Id-uA$. The Codazzi equation implies
$\operatorname{div}P_1=0$: in local components,
$$
 \nabla_i(P_1)^i{}_j=\nabla_jH-\nabla_iA^i{}_j=0.
$$
Consequently
$$
 \operatorname{div}(P_1Z)
 =\operatorname{tr}P_1-u\operatorname{tr}(P_1A)
 =(n-1)H-u\Scal.
$$
After integration, constancy of $\Scal=n(n-1)c$ yields
\begin{equation}\label{eq:rigidity-minkowski}
 \int_M H_1\,d\mu=c\int_Mu\,d\mu=c(n+1)V.
\end{equation}

As in the proof of \cite[Theorem~1, p.~449]{Ros1987}, smooth Dirichlet
theory supplies a solution $v\in C^\infty(W)$ of
$$
 \Delta v=1\quad\text{on }W,\qquad v=0\quad\text{on }M,
$$
where $\Delta=\operatorname{tr}\nabla^2$, and set
$q=\partial_\eta v$. Then $\int_Mq\,d\mu=V$. For
$$
 Y=(\Delta v)\nabla v-
       \bigl(\nabla^2v(\nabla v,\cdot)\bigr)^\sharp,
$$
the contracted Ricci identity gives
$$
 \operatorname{div}Y
 = (\Delta v)^2-|\nabla^2v|^2
   -\operatorname{Ric}_{W}(\nabla v,\nabla v).
$$
On $M$ we have $\nabla v=q\eta$ and
$\Delta v=\nabla^2v(\eta,\eta)+Hq$, hence
$\langle Y,\eta\rangle=Hq^2$. Flatness and the pointwise inequality
$|\nabla^2v|^2\ge(\Delta v)^2/(n+1)$ imply
$$
 \int_MHq^2\,d\mu
 =\int_W\bigl(1-|\nabla^2v|^2\bigr)\,dV_W
 \le\frac n{n+1}V.
$$
Since $H>0$, the weighted Cauchy--Schwarz inequality gives
$$
 V^2\le
 \left(\int_MHq^2\,d\mu\right)
 \left(\int_M\frac1H\,d\mu\right)
 \le\frac n{n+1}V\int_M\frac1H\,d\mu.
$$
Thus
\begin{equation}\label{eq:rigidity-hk}
 \int_M\frac1{H_1}\,d\mu\ge(n+1)V.
\end{equation}

Lemma~\ref{lem:admissibility} gives $H_1^2\ge c$, whereas
\eqref{eq:rigidity-minkowski} gives the reverse integral inequality:
$$
 \int_M\frac1{H_1}\,d\mu
 \le\frac1c\int_M H_1\,d\mu=(n+1)V.
$$
Equality in conjunction with \eqref{eq:rigidity-hk} forces the continuous
nonnegative function $H_1/c-1/H_1$ to vanish. Therefore $H_1=\sqrt c$ and
$A=\sqrt c\,\Id$ everywhere.

Put $r=c^{-1/2}$. Since $D(F-r\nu)=0$ and $M$ is connected,
$F-r\nu=a_0$ for a fixed $a_0\in\R^{n+1}$. The map $F$ thus takes values
in the round sphere $S^n_r(a_0)$ and is a local diffeomorphism onto it.
Its image is open and closed, so it is surjective. Compactness makes
$F:M\to S^n_r(a_0)$ a proper local diffeomorphism and hence a covering.
Since $n\ge2$ and $M$ is connected, the simple connectedness of $S^n$
makes this covering one-sheeted. The resulting diffeomorphism is an
isometry for the induced metric.
\end{proof}

\begin{proof}[Proof of Theorem~\ref{thm:csc}]
For a compact connected hypersurface immersion with constant scalar
curvature, Lemma~\ref{lem:admissibility} supplies a global normal for which
$P_1>0$. Theorem~\ref{thm:filling} then supplies a compact smooth immersed
filling, and Proposition~\ref{prop:ros-pinkall} gives the round sphere
classification.
\end{proof}

\subsection*{Declaration of competing interest}
	The authors declare that they have no known competing financial interests in this paper.

	\subsection*{Data availability}
	No data were used for the research described in the article.
	
\subsection*{On the use of AI}
ChatGPT 6 was employed to discuss potential ideas. Kimi K3 was used for grammatical refinement, spelling correction, and suggestions on the phrasing of section and lemma headings. Neither ChatGPT 6 nor Kimi K3 was used to produce any manuscript text, arguments, proofs, or computations. The authors take full responsibility for this work.

\bibliographystyle{amsplain}

\providecommand{\bysame}{\leavevmode\hbox to3em{\hrulefill}\thinspace}
\providecommand{\MR}{\relax\ifhmode\unskip\space\fi MR }

\providecommand{\MRhref}[2]{%
  \href{http://www.ams.org/mathscinet-getitem?mr=#1}{#2}
}
\providecommand{\href}[2]{#2}

\end{document}